\documentclass[12pt,reqno]{amsart}
\usepackage{amssymb}
\usepackage{cases}
\usepackage{bbm}
\usepackage{mathrsfs}
\usepackage{graphicx}
\usepackage{amsfonts}
\usepackage{enumerate}
\usepackage{hyperref}

 \usepackage[dvipsnames]{xcolor}

\usepackage{amssymb, amsmath, amsfonts, latexsym}
\usepackage{enumerate}
\usepackage{color}

\setlength{\topmargin}{0cm} \setlength{\oddsidemargin}{0cm}
\setlength{\evensidemargin}{0cm} \setlength{\textwidth}{15.6truecm}
\setlength{\textheight}{21.8truecm}

\newtheorem{thm}{Theorem}
\newtheorem{cor}{Corollary}
\newtheorem{lem}{Lemma}
\newtheorem{prop}{Proposition}
\newtheorem{example}{Example}
\newtheorem{remarks}{Remark}

\newtheorem{defn}{Definition}
\newtheorem{hyp}{Hypothesis}
\numberwithin{equation}{section}

\date{}

\def\<{\langle}
\def\>{\rangle}

\def\d"{^{\prime\prime}}

\def\bhyp{\begin{hyp}}
\def\nhyp{\end{hyp}}

\def\bdef{\begin{defn}}
\def\ndef{\end{defn}}

\def\bthm{\begin{thm}}
\def\nthm{\end{thm}}

\def\bprop{\begin{prop}}
\def\nprop{\end{prop}}

\def\brmk{\begin{remarks}}
\def\nrmk{\end{remarks}}

\def\bexa{\begin{example}}
\def\nexa{\end{example}}

\def\blem{\begin{lem}}
\def\nlem{\end{lem}}

\def\bcor{\begin{cor}}
\def\ncor{\end{cor}}

\def\bexe{\begin{exe}}
\def\nexe{\end{exe}}

\def\bprf{\begin{proof}}
\def\nprf{\end{proof}}

\def\bdes{\begin{description}}
\def\ndes{\end{description}}

\def\benu{\begin{enumerate}}
\def\nenu{\end{enumerate}}

\usepackage{tikz}

\usepackage{geometry}
\geometry{%
  left=3.1cm,right=3.1cm,
  top=95pt,bottom=95pt, 
  headsep=10pt,
  a4paper
}


\begin{document}

 \title[Brownian particules with electrostatic repulsion]
 {Quasi-stationarity of the Dyson Brownian motion with collisions}

\author[A. Guillin]{\textbf{\quad {Arnaud} Guillin$^{\dag}$    }}
\address{{\bf Arnaud Guillin}. Universit\'e Clermont Auvergne, CNRS, LMBP, F-63000 CLERMONT-FERRAND, FRANCE}
 \email{arnaud.guillin@uca.fr}

\author[B. Nectoux]{\textbf{\quad Boris Nectoux$^{\dag}$  }}
\address{{\bf Boris Nectoux}.Universit\'e Clermont Auvergne, CNRS, LMBP, F-63000 CLERMONT-FERRAND, FRANCE}
 \email{boris.nectoux@uca.fr}

\author[L. Wu]{\textbf{\quad Liming Wu$^{\dag}$ \, \, }}
\address{{\bf Liming Wu}. Universit\'e Clermont Auvergne, CNRS, LMBP, F-63000  CLERMONT-FERRAND, FRANCE, and,
Institute for Advanced Study in Mathematics, Harbin Institute of Technology, Harbin 150001, China}
\email{Li-Ming.Wu@uca.fr}

\begin{abstract}
In this work,  we investigate the ergodic behavior  of  a  system of particules, subject to collisions, before it exits a fixed subdomain of its state space. This system is composed of several  one-dimensional   ordered Brownian particules in interaction with   electrostatic repulsions, which is usually referred as the (generalized)  Dyson Brownian motion. The starting points of our analysis are the work [E. Cépa and D.  Lépingle, 1997 Probab. Theory Relat. Fields] which provides existence and uniqueness of such a system subject to collisions via the theory of multivalued SDEs and  a Krein–Rutman   type theorem   derived in   [A. Guillin, B. Nectoux, L. Wu, 2020 J. Eur. Math. Soc.].

\end{abstract}
\maketitle

\vskip 20pt\noindent {\it AMS 2020 Subject classifications.}       37A30, 37A60, 60B10,  60J60, 60J65.

\vskip 20pt\noindent {\it Key words and Phrases.}   Quasi-stationarity, killed multivalued SDEs,   Dyson Brownian motion.  

\section{Introduction}

    \subsection{The model}
   Let $N\ge 1$ and consider the open connected set $\mathscr O:= \{x=(x^1,\ldots, x^N)\in \mathscr R\times \ldots \times\mathscr R, x^1<\ldots<x^N\}$.  We will simply denote $\mathscr R\times \ldots \times\mathscr R $ by $\mathscr R^N$. Note  that $\mathscr O$  is a nonempty unbounded open  convex subset of $\mathscr R^N$.  For $x=(x^1,\ldots, x^N) \in \mathscr R^N$, we consider the confining potential 
   $$V_c(x)=  \sum_{k=1} \mathbf v(x^k),$$
 where $\mathbf v:\mathscr  R\to \mathscr [0,+\infty)$. We assume throughout  this work that $\mathbf v$ is a smooth convex function  such that its derivative $\mathbf v' $ is globally Lipschitz.  We will  also need,  to construct a suitable Lyapunov function,  the following extra assumption on $\mathbf v$. 
For every $\delta>0$ such that 
 \begin{equation}\label{eq.v'}
\lim_{|u|\to +\infty} \mathbf v''(u)/2 -  \delta  |\mathbf v'(u)|^2   =-\infty.
 \end{equation}
 The prototypical exemple of such a function $\mathbf v$ is the quadratic potential $u\in \mathscr R \mapsto  a_{\mathbf v} |u|^2$. 
 Note that $V_c$ is smooth, convex,  and its gradient is globally Lipschitz as well. 
Let us also consider the following interaction potential defined by,  for $\gamma>0$, 
   $$V_I(x) = \left\{
    \begin{array}{ll}
       - \gamma \sum_{1\le i<j\le N} \ln (x^j-x^i)& \mbox{ if  } x\in \mathscr O \\
        + \infty & \mbox{ if } x\notin \mathscr O.
    \end{array}
\right.
$$ 
This proper lower semi-continuous convex function satisfies 
 $${\rm dom}\, (V_I):=\{x\in \mathscr R^N, V_I(x)<+\infty\}=\mathscr O.$$
 Its subdifferential $\partial V_I$ is a simple-valued maximal monotone operator, with 
 $${\rm dom}\, (\nabla V_I)={\rm dom}\, (V_I)=\mathscr O.$$
  Let  $(\mathbf \Omega, \mathcal F, (\mathcal F_t)_{t\ge 0}, \mathbb P)$ be a  filtered probability space, where the filtration satisfies the usual condition (namely it is complete  and right-continuous), and let $(B_t,t\ge 0)$ be standard $\mathscr R^N$ Brownian motion.
 From the theory of multivalued SDEs established in \cite{cepathesis} (see also~\cite{cepa1998problame,zhang2007skorohod,pardouxB}), for all $x\in \overline {\mathscr O}$, there exists a unique strong continuous solution $((X_t, K_t),t\ge 0)$ of 
\begin{equation}\label{eq.L}
dX_t= -\nabla V_c(X_t)dt-dK_t+ dB_t,
\end{equation}
such that:
   \begin{enumerate}
   \item  The process $(K_t,t\ge 0)$ has  a finite variation and $K_0=0$. 
   \item   The process $(X_t,t\ge 0)$ lies in $\overline{\mathscr O}$ for all $t\ge 0$.
   \item For every continuous process $(\alpha,\beta)$ such that for all $s\ge 0$, $(\alpha_s,\beta_s)\in {\rm Gr}\, (\partial V_I)$ (the graph of $\partial V_I$), the measure $\langle X_s-\alpha_s, dK_s-\beta_sds\rangle$ is a.s. non negative on $\mathscr R_+$.
   \end{enumerate}
   We denote by $((X_t(x), K_t(x)),t\ge 0)$ this unique solution and we write $X_t(x)=(x^1_t(x),\ldots, x^N_t(x))$. 
Note that $t\mapsto dK_t(x)$ may \textit{a priori}  not be necessarily absolutely continuous with respect to the    Lebesgue measure on $\mathscr R$. 
One of the main contributions of~\cite{cepaL97} is to  prove that it is actually the case  and that there is no boundary term. More precisely the following result is proved there and it is the starting point of our work.
\begin{thm}[\cite{cepaL97}]\label{th.Cepa}
    For for all $x\in \overline {\mathscr O}$, the following assertions hold true:
    \begin{enumerate}
   \item[{\rm \textbf{i}}.]  $\mathbb P_x[ \{s\ge 0, X_s\in \partial \mathscr O\} \text{ has zero Lebesgue measure}]=1$. 
   \item[{\rm \textbf{ii}}.]  For all $t\ge 0$ and $1\le i<j\le N$, a.s. 
\begin{equation}\label{eq.Cd}
\int_0^t\frac{ds}{x^j_s(x)-x^i_s(x)}<+\infty.
\end{equation} 
   \item[{\rm \textbf{iii}}.]  A.s. $dK_t(x)=\nabla V_I(X_t(x))dt$. 
   \end{enumerate}
   \end{thm}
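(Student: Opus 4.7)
The overall plan is regularization: replace the singular convex potential $V_I$ by a smooth convex approximation $V_I^\varepsilon$, work with the corresponding SDEs (which have classical strong solutions in $\mathscr R^N$), derive uniform It\^o-type estimates, and pass to the limit. A convenient choice is the shifted potential $V_I^\varepsilon(x)=-\gamma\sum_{i<j}\log(x^j-x^i+\varepsilon)$ extended convexly across $\mathscr R^N$, or equivalently the Moreau--Yosida envelope of $V_I$; in either case $V_I^\varepsilon$ is $C^1$, convex, with globally Lipschitz gradient. The approximating SDE
$$dX_t^\varepsilon=-\nabla V_c(X_t^\varepsilon)\,dt-\nabla V_I^\varepsilon(X_t^\varepsilon)\,dt+dB_t,\qquad X_0^\varepsilon=x,$$
has a unique strong solution, and the stability theory for multivalued SDEs (\cite{cepathesis,pardouxB}) yields $(X^\varepsilon,K^\varepsilon)\to (X,K)$ locally uniformly in probability, where $K_t^\varepsilon:=\int_0^t\nabla V_I^\varepsilon(X_s^\varepsilon)\,ds$.

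The heart of the argument is a uniform It\^o a priori estimate, obtained by combining two computations. First, It\^o's formula applied to $V_I^\varepsilon(X_t^\varepsilon)$ gives
$$V_I^\varepsilon(X_t^\varepsilon)+\int_0^t|\nabla V_I^\varepsilon(X_s^\varepsilon)|^2\,ds=V_I^\varepsilon(x)-\int_0^t\nabla V_I^\varepsilon\!\cdot\!\nabla V_c\,ds+\tfrac12\int_0^t\Delta V_I^\varepsilon(X_s^\varepsilon)\,ds+M_t^\varepsilon,$$
with $M^\varepsilon$ a local martingale; Cauchy--Schwarz and the global Lipschitz bound on $\nabla V_c$ absorb the cross term into the left-hand side, but the Laplacian $\Delta V_I^\varepsilon\sim 2\gamma\sum_{i<j}(x^j-x^i+\varepsilon)^{-2}$ has to be tamed separately. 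The second computation is It\^o's formula applied to $\log(x_t^{j,\varepsilon}-x_t^{i,\varepsilon}+\varepsilon)$ for each pair $i<j$: the quadratic-variation term produces precisely the compensating quantity $-(x^j-x^i+\varepsilon)^{-2}$, while the drift produces $(x^j-x^i+\varepsilon)^{-1}$-type terms from both $\nabla V_I^\varepsilon$ and $\nabla V_c$. Combining both formulas, using convexity of $V_I^\varepsilon$ together with the confining assumption \eqref{eq.v'} on $\mathbf v$, and taking expectation after localization, I expect a uniform bound of the form
$$\mathbb E\int_0^t|\nabla V_I^\varepsilon(X_s^\varepsilon)|^2\,ds+\mathbb E\sum_{i<j}\int_0^t\frac{ds}{x_s^{j,\varepsilon}-x_s^{i,\varepsilon}+\varepsilon}\le C(t,x).$$
Fatou's lemma as $\varepsilon\downarrow 0$ then yields (ii), and the weak $L^2$-compactness of $\nabla V_I^\varepsilon(X^\varepsilon)$ gives absolute continuity of $K$.

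To finish (iii), I would identify the density of $K$ using property (3) of the solution: testing against smooth selections $(\alpha,\beta)\in\mathrm{Gr}(\partial V_I)$, the non-negativity of $\langle X_s-\alpha_s,\dot K_s-\beta_s\rangle ds$ together with Minty's lemma and the maximal monotonicity of $\partial V_I$ forces the density of $K$ to equal $\nabla V_I(X_s)$ on $\{X_s\in\mathscr O\}$, while (ii) already ensures the complementary event has zero Lebesgue measure. Statement (i) is then immediate from (ii): on any event where $\{s\le t:x_s^j=x_s^i\}$ had positive Lebesgue measure, $\int_0^t(x_s^j-x_s^i)^{-1}ds$ would be infinite, contradicting (ii). The main obstacle is the cancellation in the second paragraph: the components of $\nabla V_I$ are signed sums of $(x^j-x^i)^{-1}$ terms that can nearly cancel, so an $L^2$ bound on $\nabla V_I^\varepsilon(X^\varepsilon)$ is not by itself enough to deliver integrability of the \emph{unsigned} pairwise quantities. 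Extracting this finer control requires the coupled It\^o computation for $V_I^\varepsilon$ and $\log(x^j-x^i+\varepsilon)$ described above, which is the technical core of~\cite{cepaL97} and the step where the precise form of the logarithmic repulsion is essential.
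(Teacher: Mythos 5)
This is a result quoted from~\cite{cepaL97}; the present paper gives no proof of its own, so there is nothing internal to compare your sketch against. Evaluated on its own terms, the overall shape of your plan (regularize, uniform It\^o a priori bounds, pass to the limit, identify the density of $K$ by maximal monotonicity) is the right one, and both the deduction of~(i) from~(ii) and the Minty-type argument for~(iii) are sound as stated.

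There is, however, a genuine gap in the step that you yourself flag as the technical core, namely the uniform bound
\begin{equation*}
\mathbb E\sum_{i<j}\int_0^t\frac{ds}{x_s^{j,\varepsilon}-x_s^{i,\varepsilon}+\varepsilon}\le C(t,x).
\end{equation*}
You propose to combine It\^o's formula for $V_I^\varepsilon(X^\varepsilon_t)$ with It\^o's formula for $\log(x_t^{j,\varepsilon}-x_t^{i,\varepsilon}+\varepsilon)$ summed over pairs. But with $V_I^\varepsilon=-\gamma\sum_{i<j}\log(x^j-x^i+\varepsilon)$, the second identity is, once summed, exactly $-1/\gamma$ times the first: they are linearly dependent, and the troublesome $\tfrac12\int_0^t\Delta V_I^\varepsilon$ term you want to ``tame'' in the first is precisely the quadratic-variation term of the second, so nothing cancels and nothing new is extracted. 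Taking the pairwise identities one by one does not help either: each one produces a new family of signed cross terms $\big((x^{j}-x^{i}+\varepsilon)(x^{a}-x^{b}+\varepsilon)\big)^{-1}$ without any sign control, and the $-\log(x^j_t-x^i_t+\varepsilon)$ boundary term only yields a bound that degenerates like $-\log\varepsilon$.

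The missing ingredient is to apply It\^o to a \emph{linear} functional that exploits the ordering, most simply $\Phi(x)=\sum_{i<j}(x^j-x^i)=\sum_k(2k-N-1)x^k$, for which there is no Laplacian term at all. A short symmetrization gives, for every ordered $x$,
\begin{equation*}
-\nabla V_I(x)\cdot\nabla\Phi=\gamma\sum_{k>m}\frac{2(k-m)}{x^k-x^m}\;\ge\;2\gamma\sum_{i<j}\frac{1}{x^j-x^i},
\end{equation*}
so that \emph{every} term is non-negative; the ``near cancellations'' you worry about all have the good sign once the sum is arranged by unordered pairs. Writing It\^o for $\Phi(X^\varepsilon_t)$ (or directly for $\Phi(X_t)$, bounding $-\nabla\Phi\cdot dK$ from below via the variational characterization of $K$), integrating, and using the moment control on $X$ together with \eqref{eq.v'} then gives the desired a priori bound and hence~(ii). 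The $L^2$ bound on $\nabla V_I^\varepsilon(X^\varepsilon)$ and the identification in~(iii) can then proceed as you describe. One last remark: $\log(x^j-x^i+\varepsilon)$ is not defined on all of $\mathscr R^N$, so the explicit shifted-logarithm regularization does not literally make sense; the Moreau--Yosida envelope that you mention in passing is the one to use.
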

   
 Note that   Item \textbf{iii} indeed shows that there is   no boundary term  in the process $(K_t,t\ge 0)$. 
  When $\gamma\in (0,1/2)$,   collisions occur a.s.  and never occur when $\gamma \ge 1/2$ (see Lemma~\ref{le.col}).    Item {\rm \textbf{i}} thus  implies that time collisions are very rare in the sense of Lebesgue measure. In particular, since the trajectories of the process are continuous, the set of collision times $\{s\ge 0, X_s\in \partial \mathscr O\}$ is a.s. never dense in any subset of  $\mathscr R_+$ of non zero Lebesgue measure. 
   Item  {\rm \textbf{ii}} in Theorem \ref{th.Cepa} shows that $t\ge 0\mapsto \nabla V_I(X_t(x))$ is locally integrable. If a collision occurs in finite time, this  is thus done in an \textit{integrable  way}, i.e. in a way that preserves the integrability conditions \eqref{eq.Cd}. 
   As initially observed in~\cite{dyson1962brownian}, the process \eqref{eq.L} appears in a natural way in the study of the eigenvalues of a randomly-diffusing symmetric matrix, see~\cite{rogers1993interacting,song2022recent} and references therein.


   \subsection{Purpose of this work and motivation}
   
   The purpose of this work is to study, when collisions occur a.s. (which, we recall,  is the case if and  only if   $\gamma\in (0,1/2)$, see Lemma~\ref{le.col}),   the  long time behavior  of  the process  \eqref{eq.L}    when conditioned not to exit an open subregion $\mathscr U$ of $\overline{\mathscr O}$.  This behavior is strongly linked with the existence and uniqueness of the so-called quasi-stationary distribution of the process \eqref{eq.L} inside $\mathscr U$, see Definition \ref{de.QSD} below. 
    
%
%
   
     The main result of this work is Theorem \ref{th.1} below which describes the long time  behavior of the killed (outside $\mathscr U$) process $(X_t,t\ge 0)$, see also the intermediate results Theorems \ref{th.De}, \ref{th.SF}, \ref{th.Sf}, and \ref{th.TI} which provide information on the regularity of both the killed and the non-killed processes.   We  emphasize that we have no regularity assumption on the boundary of $\mathscr U$, which can be bounded or not.

   To prove Theorem \ref{th.1}, we rely on 
   \cite[Theorem 2.2]{guillinqsd} and  more precisely, we check that all the assumptions of this theorem are valid.   The long time behavior of the killed  process when  the collisions never occur, i.e. when $\gamma>1/2$, can be treated as in \cite[Section 4.2]{guillinqsd2} (see also~\cite{guillinqsd3}) since in such a setting,  the process $(X_t,t\ge 0)$ lies a.s. in $\mathscr O$\footnote{The energy of the system is, when $\gamma \ge 1/2$,  a Lyapunov function which prevents from collisions. }, see indeed Section \ref{sec.seE} below. This is not the case we will focus on here. As already mentioned above, we will rather consider in this work the case when collisions occur a.s. combined with the situation where (which is the   case of interest here):
   $$\mathscr U\cap \partial \mathscr O\neq \emptyset.$$
In particular, to use  \cite[Theorem 2.2]{guillinqsd}, we will have to study the regularity properties (such as the strong Feller property and the topological irreducibility)  of the non-killed and the killed (outside $\mathscr U$) semigroups, see respectively \eqref{eq.NonK} and \eqref{eq.Ki},    when the process starts at a point $x\in  \partial {\mathscr O}\cap \mathscr U$, i.e.  when initially, at least two particules share  the same position (namely starting with a collision) -  see more precisely Theorems \ref{th.De}, \ref{th.SF}, \ref{th.Sf} and \ref{th.TI}. Compared to the framework \cite{guillinqsd2} where collisions never happen, the main difficulty of the analysis here lies in the fact that the drift $\nabla V_c$, though integrable in time (see Item \textbf{ii} above), is infinite on $\partial \mathscr O$. This  prevents from using (at least directly) standard techniques for solutions of stochastic differential equations such as for instance the elliptic regularity theory, the Malliavin calculus,  the Stroock-Varadhan support theorem, or Gaussian upper bounds. 
Moreover, compared to our previous works, we cannot rely on all the tools we developed in~\cite{guillinqsd,guillinqsd2,guillinqsd3}. We will thus need  a little finesse in some places and argue differently. 


 
   \subsection{Notation}
The set $\mathcal B(\overline{\mathscr O} )$ is the Borel $\sigma$-algebra of $\overline{\mathscr O}$, and $b\mathcal B(\overline{\mathscr O})$ is the  space of all bounded and Borel measurable functions $f:\overline{\mathscr O} \to  \mathscr R$ equipped with the sup-norm
 $$\Vert f \Vert_\infty=\sup_{ x\in \overline{\mathscr O} }\vert f( x)\vert.$$
 The set  $\mathcal C_b(\overline{\mathscr O})$ denotes the space of bounded continuous real-valued functions over $\overline{\mathscr O}$.
 Given an initial distribution $\nu$ on $\overline{\mathscr O}$, we write $\mathbb P_\nu(\cdot)=\int_{\overline{\mathscr O}} \mathbb P_{x}(\cdot) \nu(d{x}) $.   The indicator function of a measurable set $\mathcal A$ is  denoted by  $\mathbf 1_{\mathcal A}$. For $T>0$, the space $\mathcal C([0,T],\overline{\mathscr O})$ is the space of continuous functions $g:[0,T]\to \overline{\mathscr O}$, which is equipped with the supremum norm.   For $p\ge 1$ and $k\ge 1$, $L^p(\mathscr R^N,dz)$ stands for the space of functions $g:\mathscr R^N\to \mathscr R^k$ such that $\Vert g\Vert_{L^p}=\int_{\mathscr R^N} |g|^p(z)dz$ is finite (note that we do not refer to the index $k$ in this notation). The set of probability measures over a subset $\mathscr U$ of $\overline{\mathscr O}$ is denoted by $\mathcal P( {\mathscr U})$.  The infinitesimal generator of the process $(X_t,t\ge 0)$ is denoted by $\mathscr L$, i.e.
 $$\mathscr L=\Delta/2 -\nabla V_c\cdot \nabla- \nabla V_I\cdot \nabla.$$
 We end this section by recalling  the notion of quasi-stationary distribution~\cite{collet2011quasi,meleard2012quasi,champagnat2021lyapunov} which is the central object to analyse the long time behavior of  conditioned processes. Such an object is at the heart of the study of biological processes~\cite{collet2011quasi,meleard2012quasi,velleret2019adaptation}  or in the study of metastable dynamics~\cite{faraday,DLLN,lelievre2022eyring}.

\begin{defn}\label{de.QSD}  A measure $ \mu \in \mathcal P({\mathscr U})$ is a  quasi-stationary distribution for the  process $(X_t,t\ge 0)$ (see \eqref{eq.L}) inside $\mathscr U\subset \overline{\mathscr O}$  if $
\mathbb P_{\mu}[X_t\in \mathcal A|t<\sigma_{\mathscr U}]=\mu(\mathcal A)$, $\forall t\ge 0$ and $\forall \mathcal A\in \mathcal B({\mathscr U})$.
\end{defn}

   \subsection{Related results}
 The process \eqref{eq.L} as well as the asymptotic behavior of its empirical measure in the limit $N\to +\infty$ have been investigated in~\cite{rogers1993interacting} in the absence of collision (i.e. when $\gamma>1/2$) and, in the collision case, in~\cite{cepaL97,cepaL2001,cepaL2007} using the theory of multivalued stochastic differential equations~\cite{cepathesis,cepa1998problame} (see also~\cite{zhang2007skorohod,pardouxB}). We also refer to~\cite{demni2009radial} for the study of  the radial Dunkl process and its first hitting time to the boundary Weyl chamber.

 The law of large numbers and the propagation of chaos for its empirical measures have been derived in \cite{li2020law,guillin2022systems}. The ergodic behavior of \eqref{eq.L} has been studied in~\cite{cepaJa,zhangM}, see also~\cite{ren2010large,ren2012regularity} for large deviations principles  in the small noise regime  and the regularity of the invariant measures for solutions to multivalued SDEs. 
  
  The process \eqref{eq.L} is elliptic in the sense that the Brownian noise acts in every direction of $\mathscr R^N$. 
Existence and uniqueness of a quasi-stationary distribution for elliptic diffusions over a  bounded subdomain $\mathscr D$  of $ \mathscr R ^d$ having  sufficiently smooth coefficients over $\overline{\mathscr D}$, is now well-known, see e.g.~\cite{pinsky1985convergence,gong1988killed,champagnat2017general,champagnat2018criteria,le2012mathematical} and references therein. The quasi-stationarity of elliptic and hypoelliptic  processes in the singular potential case and without collision has been investigated in~\cite{guillinqsd2,guillinqsd3}, and in the non singular case in~\cite{guillinqsd,ramilarxiv1,benaim2021degenerate,champagnat2024quasi}.  We also mention~\cite{tough} for existence and uniqueness of the quasi-stationary distribution  
 for the stochastic Fisher-Kolmogorov-Petrovsky-Piscunov on the circle. 
Finally, more materials on quasi-stationary distributions can be found in~\cite{collet2011quasi,meleard2012quasi}.

 \section{Preliminary results}

  \subsection{Collision time}

Let us first notice that  by uniqueness of the strong solution, by standard considerations, $(X_t,t\ge 0)$ satisfies the Markov property, see e.g.~\cite{ cepaJa} or~\cite[Section 1 in Chapter IX]{revuz2013continuous}. We denote its semigroup by 
\begin{equation}\label{eq.NonK}
\text{$P_tf(x)=\mathbb E_x[f(X_t)]$, for $f\in b\mathcal B(\overline{\mathscr O})$, $x\in \overline{\mathscr O}$}, 
\end{equation}
  which is usually called  the non-killed semigroup. The following lemma shows among other things that $P_t$ has the Feller property for all $t\ge 0$. In particular, in view of the proof of~\cite[Theorem 6.17]{le2016brownian},   $(X_t,t\ge 0)$ satisfies the strong Markov property.

 \begin{lem}\label{le.1a}
 Let $x,y\in \overline {\mathscr O}$ and $T>0$. Then,
 $$\mathbb E\Big[ \sup_{t\in [0,T]}|X_t(x)-X_t(y)|^2\Big]\le  |x-y|^2.$$
 \end{lem}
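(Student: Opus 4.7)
The plan is to couple the two processes via the same Brownian motion and exploit the fact that both $\nabla V_c$ and $\partial V_I$ are monotone operators, which will collapse the argument into a simple pathwise contraction. Drive $X_{\cdot}(x)$ and $X_{\cdot}(y)$ by the same $B_{\cdot}$, and set $Z_t := X_t(x)-X_t(y)$. Subtracting the two copies of \eqref{eq.L}, the noise cancels, and using Theorem~\ref{th.Cepa}(iii) to identify $dK_s(x)=\nabla V_I(X_s(x))\,ds$ and $dK_s(y)=\nabla V_I(X_s(y))\,ds$, we obtain
\[
Z_t = (x-y) - \int_0^t\!\bigl(\nabla V_c(X_s(x))-\nabla V_c(X_s(y))\bigr)ds - \int_0^t\!\bigl(\nabla V_I(X_s(x))-\nabla V_I(X_s(y))\bigr)ds ,
\]
a continuous finite-variation process.

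Applying the chain rule to $t\mapsto |Z_t|^2$ then gives
\[
|Z_t|^2 = |x-y|^2 - 2\!\int_0^t\!\langle Z_s,\nabla V_c(X_s(x))-\nabla V_c(X_s(y))\rangle\,ds - 2\!\int_0^t\!\langle Z_s,\nabla V_I(X_s(x))-\nabla V_I(X_s(y))\rangle\,ds .
\]
Both integrands are a.s.\ nonnegative: the first by monotonicity of $\nabla V_c$, which follows from convexity of $V_c$; the second by monotonicity of $\nabla V_I$ on $\mathscr{O}$, which follows from the maximal monotonicity of the subdifferential $\partial V_I$. Hence $|Z_t|^2 \le |x-y|^2$ pathwise for every $t \ge 0$, and by continuity this yields $\sup_{t\in[0,T]}|X_t(x)-X_t(y)|^2 \le |x-y|^2$ almost surely, which is in fact strictly stronger than the statement of the lemma; taking expectation concludes.

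The only delicate point—which is precisely what Theorem~\ref{th.Cepa} is tailored to settle—is making sense of the integrals $\int_0^t\nabla V_I(X_s(\cdot))\,ds$ despite the singularity of $\nabla V_I$ on $\partial\mathscr{O}$. Item \textbf{i} ensures $\nabla V_I(X_s(\cdot))$ is well-defined for a.e.\ $s$, item \textbf{ii} provides local integrability in time (componentwise, via \eqref{eq.Cd}), and item \textbf{iii} converts the \emph{a priori} merely finite-variation processes $K(x)$, $K(y)$ into absolutely continuous ones, so that the standard chain rule applies without any boundary contribution. Once this representation is in hand, the monotonicity argument is essentially automatic; I expect no other obstacles.
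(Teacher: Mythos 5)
Your proof is correct and follows the same route as the paper: couple the two solutions through a common Brownian motion, observe that $Z_t=X_t(x)-X_t(y)$ is a continuous finite-variation process, differentiate $|Z_t|^2$, and use monotonicity of both the confinement drift and the interaction term to obtain the pathwise contraction $|Z_t|\le|x-y|$. The one place where you diverge slightly is the treatment of the $dK$ terms. You first invoke Theorem~\ref{th.Cepa}(iii) to write $dK_t(\cdot)=\nabla V_I(X_t(\cdot))\,dt$ and then use monotonicity of $\nabla V_I$ on $\mathscr O$ (together with Items~\textbf{i}--\textbf{ii} to make sense of the integrals). The paper instead appeals directly to the abstract inequality $\langle X_s(x)-X_s(y),\,dK_s(x)-dK_s(y)\rangle\ge 0$ from \cite[Proposition 4.1]{cepa1998problame}, which is a consequence of the maximal monotonicity of $\partial V_I$ in the multivalued SDE framework and holds \emph{without} requiring $K$ to be absolutely continuous. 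Both arguments are valid here because Theorem~\ref{th.Cepa}(iii) is available for this model, but your version leans on a deeper, model-specific regularity result than the lemma actually needs, whereas the paper's version is lighter and would apply verbatim to multivalued SDEs whose boundary term $K$ is genuinely singular.
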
 
 \begin{proof}
Using~\cite[Proposition 4.1]{cepa1998problame} (see also~\cite{cepathesis}) together with  the convexity of $V_c$, it holds:  
\begin{align*}
d |X_t(x)-X_t(y)|^2&= -2(X_t(x)-X_t(y))\cdot (\nabla V_c(X_t(x))-\nabla  V_c(X_t(y)) \, dt\\
&\quad - 2(X_t(x)-X_t(y))\cdot (dK_t(x)-dK_t(y)) \\
&\le 0,
\end{align*} 
which proves the desired result. 
 \end{proof}
%

   Let us now introduce the first (positive) collision time:
 $$\sigma_{{\rm col}}:=\inf\{t > 0, x^j_t=x^i_t \text{ for some } i<j\}.$$
    Note that $\sigma_{{\rm col}}$   is also the first positive time the process $(X_t,t\ge 0)$ hits the boundary $\partial \mathscr O$ of $\mathscr O$ (or equivalently, exits $\mathscr O$).  Depending on the parameter $\gamma>0$, collisions between the $N$ particules either always occur or never occur.
   
 \begin{lem}\label{le.col} 
If  $\gamma \ge 1/2$,  then $\mathbb P_{x_0}[ \sigma_{{\rm col}}=+\infty]=1$ for all $ x_0\in  {\mathscr O}$. If $\gamma \in (0, 1/2)$, $\mathbb P_{x_0}[ \sigma_{{\rm col}}<+\infty]=1$ for all $ x_0\in \overline {\mathscr O}$.  
 \end{lem}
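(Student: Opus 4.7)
The plan is to apply It\^o's formula to $V_I(X_t)$ on the stochastic interval $[0,\sigma_{\rm col})$, using Theorem~\ref{th.Cepa}.iii to rewrite \eqref{eq.L} as the classical It\^o SDE $dX_t = -\nabla V_c(X_t)\,dt - \nabla V_I(X_t)\,dt + dB_t$ before the first collision. A direct computation gives $\Delta V_I = 2\gamma \sum_{i<j}(x^j-x^i)^{-2}$; the three-body partial fraction identity
\[
\sum_{k}\sum_{\substack{l,m \ne k \\ l \ne m}} \frac{1}{(x^k-x^l)(x^k-x^m)} = 0,
\]
which reflects the cancellation $\frac{1}{(a-b)(a-c)}+\frac{1}{(b-a)(b-c)}+\frac{1}{(c-a)(c-b)}=0$ over each triple $\{a,b,c\}$, reduces $|\nabla V_I|^2$ to $2\gamma^2 \sum_{i<j}(x^j-x^i)^{-2}$. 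Antisymmetrizing the cross term produces $-\nabla V_c\cdot \nabla V_I = \gamma \sum_{k<l}(\mathbf v'(x^l)-\mathbf v'(x^k))/(x^l-x^k)$, so altogether
\[
\mathscr L V_I(x) = \gamma(1-2\gamma)\sum_{i<j}\frac{1}{(x^j-x^i)^2} + \gamma \sum_{k<l} \frac{\mathbf v'(x^l)-\mathbf v'(x^k)}{x^l-x^k}.
\]

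For $\gamma \ge 1/2$ the first sum is nonpositive and, since $\mathbf v$ is convex and $\mathbf v'$ globally Lipschitz, the second sum is nonnegative and bounded by $C := \gamma \binom{N}{2}\|\mathbf v''\|_\infty$; hence $\mathscr L V_I \le C$ on $\mathscr O$. I would then localize at $\tau_n := \inf\{t \ge 0 : V_I(X_t) \ge n\}$; since $V_I \to +\infty$ at $\partial \mathscr O$, $\tau_n \uparrow \sigma_{\rm col}$, and It\^o's formula combined with Markov's inequality yields
\[
n\,\mathbb P_{x_0}[\tau_n \le t] \le \mathbb E_{x_0}[V_I(X_{t \wedge \tau_n})] \le V_I(x_0) + Ct
\]
for every $t > 0$ and every $x_0 \in \mathscr O$. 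Letting $n \to \infty$ forces $\mathbb P_{x_0}[\sigma_{\rm col} \le t] = 0$ for every $t$, giving the first assertion.

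For $\gamma \in (0,1/2)$, I would track the leftmost gap $Y_t := x^2_t - x^1_t$. Expanding $d(x^2_t-x^1_t)$ from the It\^o SDE gives
\[
dY_t = -\bigl(\mathbf v'(x^2_t)-\mathbf v'(x^1_t)\bigr)\,dt + \frac{2\gamma}{Y_t}\,dt - \gamma Y_t \sum_{l>2}\frac{dt}{(x^l_t-x^1_t)(x^l_t-x^2_t)} + \sqrt{2}\,dW_t,
\]
with $W := (B^2-B^1)/\sqrt{2}$ a standard Brownian motion. Convexity of $\mathbf v$ makes the first drift nonpositive, and the ordering $x^1 < x^2 < x^l$ for $l > 2$ makes the third drift nonpositive, so $dY_t \le (2\gamma/Y_t)\,dt + \sqrt{2}\,dW_t$ on $[0,\sigma_{\rm col})$. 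The comparison SDE $dR_t = (2\gamma/R_t)\,dt + \sqrt{2}\,dW_t$ with $R_0 = Y_0$ is, after rescaling by $\sqrt{2}$, a Bessel process of dimension $2\gamma + 1 < 2$, hence hits $0$ in finite time almost surely. The classical Yamada--Watanabe comparison, valid up to the first zero of $R$ (where $r \mapsto 2\gamma/r$ is locally Lipschitz), gives $Y_t \le R_t$ on $[0,\sigma_{\rm col})$; since $Y \ge 0$ before collision, $\sigma_{\rm col}$ is dominated by the a.s.\ finite hitting time of $0$ by $R$. The case $x_0 \in \partial\mathscr O$ is reduced to the case $x_0\in \mathscr O$ by Theorem~\ref{th.Cepa}.i, which guarantees $X_s(x_0) \in \mathscr O$ for a.e.\ $s > 0$, combined with the strong Markov property.

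The hard part will be the critical threshold $\gamma = 1/2$: the singular negative term in $\mathscr L V_I$ vanishes, a na\"ive Young-type absorption of $-\nabla V_c \cdot \nabla V_I$ by the singular term is unavailable, and one is forced to exploit the specific antisymmetric structure that replaces the cross term by the bounded monotone-difference quotient of $\mathbf v'$. The one-dimensional comparison in the collisional case also requires care to justify Yamada--Watanabe up to the singular drift $1/r$.
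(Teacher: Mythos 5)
Your $\gamma\ge 1/2$ argument contains a genuine gap. The computation
$\mathscr L V_I = \gamma(1-2\gamma)\sum_{i<j}(x^j-x^i)^{-2} + \gamma\sum_{k<l}\frac{\mathbf v'(x^l)-\mathbf v'(x^k)}{x^l-x^k}$
is correct and yields $\mathscr L V_I\le C$ on $\mathscr O$ (including at $\gamma=1/2$, where the singular coefficient is simply zero, so your flagged worry there is not actually an obstacle). The problem is one step later: $V_I$ is \emph{not} bounded below on $\mathscr O$ (it tends to $-\infty$ as the gaps grow), so the inequality $n\,\mathbb P_{x_0}[\tau_n\le t]\le\mathbb E_{x_0}[V_I(X_{t\wedge\tau_n})]$ does not follow as written: on $\{\tau_n>t\}$ the term $V_I(X_t)$ can be arbitrarily negative and is not dominated by the boundary value $n$. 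Relatedly, the sublevel set $\{V_I<n\}$ is unbounded and does not control $|\nabla V_I|$, so localizing at $\tau_n$ alone does not make $\int_0^{\cdot\wedge\tau_n}\nabla V_I(X_s)\cdot dB_s$ a true martingale. Both issues disappear once $V_I$ is replaced by the energy $H=V_c+V_I$, which under the standing hypotheses on $\mathbf v$ is bounded below with compact sublevel sets in $\mathscr O$; this is exactly what the paper does for $\gamma\ge 1/2$ by citing Rogers--Shi \cite{rogers1993interacting}, whose Lemma~1 gives $\mathscr L H\le CH$. Alternatively you could localize additionally at $\rho_M=\inf\{t:|X_t|\ge M\}$ and bound $\mathbb E_{x_0}[V_I^-(X_t)]$ via a second-moment estimate on $X_t$, but such an estimate is only established in the proof of Lemma~\ref{le.22} later in the paper and would need to be imported here.

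Your $\gamma\in(0,1/2)$ argument is essentially the paper's, modulo squaring. The paper tracks $\wp_t=(x^{\ell+1}_t-x^\ell_t)^2$ rather than the gap $Y_t$, bounds $d\wp_t$ above by $2(2\gamma+1)\,dt+2\sqrt{2\wp_t}\,dw_t$, and compares via Ikeda--Watanabe to a squared Bessel process of dimension $2\gamma+1\in(1,2)$. Squaring is the cleaner choice precisely because it moves the singularity out of the drift and into the H\"older-$1/2$ diffusion coefficient $\sqrt{\wp}$, which is the natural setting for the Ikeda--Watanabe theorem with no localization near zero; the delicacy you rightly flag about justifying a one-dimensional comparison up to the singular $1/r$ drift is thereby avoided altogether. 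Your reduction of the boundary case $x_0\in\partial\mathscr O$ via Theorem~\ref{th.Cepa}.i and the Markov property is correct (the simple Markov property at a fixed $s>0$ suffices); the paper handles it even more directly by running the squared-Bessel comparison from $\wp_0=0$ and noting that a squared Bessel process of dimension in $(1,2)$ started at $0$ almost surely returns to $0$ at some positive time.
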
 
    
   
   \begin{proof}
The case when $\gamma\ge 1/2$ has already been treated in~\cite{rogers1993interacting} using the energy $x\in \mathscr O \mapsto H(x):=V_c(x)+V_I(x)$ as a Lyapunov function which prevents from collisions.
 Assume now that $\gamma\in (0,1/2)$.  Let us prove that $\mathbb P_x[ \sigma_{{\rm col}}<+\infty]=1$. Such a result have been proved in a very similar setting in~\cite{cepaL2001} using Legendre process in the non confining case. We propose here another proof based on a standard argument involving Bessel processes. 
 To this end introduce the first positive collision time between the $\ell$-th particule and the $(\ell+1)$-th particule ($\ell\in \{1,\ldots,N-1\}$):
 $$\sigma_{\ell, \ell+1}:=\inf\{t > 0, x^\ell_t=x^{\ell+1}_t\}, \  \ell \in \{1,\ldots,N-1\}.$$
 Clearly, we have a.s. that  $$\sigma_{{\rm col}}\le \sigma_{\ell, \ell+1}.$$
  In the following,       we omit to write the dependency of the involved  processes in the initial conditions $x_0\in \overline{\mathscr O}$. Set $\wp _t:= |x^{\ell+1}_t-x^\ell_t|^2$ the squared distance between the $\ell$-th particule and the $(\ell+1)$-th particule, $\ell\in \{1,\ldots,N-1\}$. Note that  by Item 
 {\rm \textbf{iii}} in Theorem \ref{th.Cepa}, for all $t>0$, a.s. $\int_0^t|\nabla V_I(X_s(x))|ds<+\infty$, and we can thus use Itô formula, cf. e.g.~\cite[Theorem 4.3.10]{lamberton2007introduction}, to deduce that:
 \begin{align*}
 \wp _t&=  \wp _0+ 2t- 2 \int_0^t(x^{\ell+1}_s-x^\ell_s)(\partial_{x^{\ell+1}}V_c(X_s) - \partial_{x^\ell}V_c(X_s))  ds \\
 &\quad + 2 \int_0^t\sqrt{\wp _s}\, d(B^{\ell+1}_s-B^{\ell}_s)\\
 &\quad +2\gamma  \int_0^t \Big[ \, \sum_{j=1,j\neq \ell}^N\frac{x^{\ell+1}_s-x^\ell_s}{x^{\ell+1}_s-x^j_s} + \sum_{k=1,k\neq \ell+1}^N\frac{x^{\ell}_s-x^{\ell+1}_s}{x^{\ell}_s-x^k_s} \,  \Big]\, ds.
  \end{align*}
  Set for $x=(x^1,\ldots, x^N)\in  {\mathscr O}$:
 \begin{align*}
 \mathbf   b(x)&=-2 (x^{\ell+1}_t-x^\ell_t))(\partial_{x^{\ell+1}}V_c(x) - \partial_{x^\ell}V_c(x))) \\
 &\quad +2\gamma \Big[ \, \sum_{j=1,j\neq \ell}^N\frac{x^{\ell+1}-x^\ell}{x^{\ell+1}-x^j} + \sum_{k=1,k\neq \ell+1}^N\frac{x^{\ell}-x^{\ell+1}}{x^{\ell}-x^k} \,  \Big].
  \end{align*}
  Since $\mathbf v$ is convex,  we have for all $ x=(x^1,\ldots, x^N)\in  {\mathscr O}$:
  \begin{align*}
\mathbf   b(x)& \le  4\gamma  + 2 \gamma \sum_{j\notin\{\ell,\ell+1\}}\Big[\, \frac{x^{\ell+1}-x^\ell}{x^{\ell+1}-x^j} + \frac{x^{\ell}-x^{\ell+1}}{x^{\ell}-x^j}  \, \Big]  \\
& \le  4\gamma  + 2 \gamma \sum_{j<\ell \text{ or } j>\ell+1} (x^{\ell+1}-x^\ell)\,  \frac{x^{\ell+1}-x^\ell}{(x^{\ell+1}-x^j)(x^j-x^{\ell})}   \\
 &\le   4\gamma  - 2 \gamma  \sum_{j<\ell \text{ or } j>\ell+1}     \frac{ |x^{\ell+1}-x^\ell|^2 }{(x^{\ell+1}-x^j)(x^{\ell}-x^j)}   \\
 &\le 4\gamma,
  \end{align*} 
 where, to deduce the last inequality, we have used that  the particules are ordered. 
 By Item \textbf i in Theorem \ref{th.Cepa}, we have a.s. for all $t\ge 0$, $\int_0^t \mathbf b(X_s)ds= \int_0^t\mathbf 1_{X_s\in \mathscr O} \mathbf b(X_s)ds$. 
  Hence, a.s. we have for all $t\ge 0$, 
  \begin{align*}
  \wp _t  &\le    \wp _0+ 2(2\gamma+1)t   + 2 \int_0^t \sqrt{2\wp _s}\, dw_s.    
 \end{align*} 
  In the above  inequality, $w_t:=(B^{\ell+1}_t-B^\ell_t )/\sqrt 2$ is standard real Brownian motion. By  the comparison theorem of Ikeda and Watanabe for  one-dimensional ltô processes~\cite{ikeda1977comparison}, 
 it holds a.s.  
 \begin{equation}\label{eq.B<}
0\le  \wp _t\le \mathscr B_t, \text{  for all $t\ge 0$}, 
 \end{equation}
  where $(\mathscr B_t,t\ge 0)$ solves the equation 
 $$d\mathscr B_t=2(2\gamma+1)dt   + 2   \sqrt{2 \mathscr B_t}\, dw_t, \ y_0= \wp _0= |x^{\ell+1}_0-x_0^\ell |^2\ge 0.$$ 
 The process 
  $(\mathscr B_{t/2},t\ge 0)$ is thus a squared Bessel process of dimension $2\gamma+1\in (1,2)$, see e.g.~\cite[Section 1 in Chapter XI]{revuz2013continuous}. It is well known that 
  the Lebesgue measure of the set $\{t \ge 0, \mathscr B_{t/2}=0\}$ is zero and 
   $(\mathscr B_{t/2},t\ge 0)$   is  reflected infinitely often at the point $0$, see~\cite[Section 11]{revuz2013continuous}. Consequently, this implies that $\mathbb P[ \exists t>0, \mathscr B_t=0]=1$. In conclusion $\mathbb P_{x_0}[\sigma_{ \ell,\ell+1}<+\infty]=1$.   This concludes the proof of the lemma.  
   \end{proof}
   

   Let us mention that it is proved in~\cite{cepaL2007} that  multiple collisions can not occur at any positive time.   In all the rest of this work, we will assume that  $\gamma\in (0,1/2)$ and thus we will work in the case where a.s. collisions occur (see Lemma \ref{le.col}).

    \subsection{On the non-killed semigroup}
    
    In this section, we provide results on the non-killed semigroup we will need to prove Theorem \ref{th.1} below. 
    We start with the following theorem. 
    
     \begin{thm}\label{th.De}
For all $t>0$ and $x\in \overline{\mathscr O}$, $X_t(x)$ has a density w.r.t. the Lebesgue measure $dz$ over $\mathscr R^N$. 
    \end{thm}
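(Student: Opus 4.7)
The plan is to combine a localized Girsanov change of measure---which bypasses the singularity of $\nabla V_I$ on $\partial\mathscr O$---with the Markov property and Item \textbf{i} of Theorem~\ref{th.Cepa}.

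\textbf{Step 1 (interior case modulo collisions).} I would first fix $y\in\mathscr O$ and, for $n\ge 1$, introduce the stopping times
\[
\tau_n:=\inf\bigl\{s\ge 0:\ \min_{1\le i<j\le N}(x^j_s(y)-x^i_s(y))\le 1/n\ \text{ or }\ |X_s(y)|\ge n\bigr\},
\]
which are strictly positive $\mathbb P_y$-a.s. and increase to $\sigma_{\rm col}(y)\wedge+\infty$ by continuity of the paths. On $[0,t\wedge\tau_n]$ the drift $\nabla V:=\nabla V_c+\nabla V_I$ is bounded by a constant depending only on $n$ (the Lipschitz character of $\mathbf v'$ bounds $|\nabla V_c|$ on $\{|X|\le n\}$, while the definition of $\tau_n$ uniformly bounds $1/(x^j_s-x^i_s)$). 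Novikov's condition then holds for the Girsanov exponential
\[
Z^n_t:=\exp\!\Bigl(\int_0^{t\wedge\tau_n}\nabla V(X_s)\cdot dB_s-\tfrac12\int_0^{t\wedge\tau_n}|\nabla V(X_s)|^2\,ds\Bigr),
\]
and under $\mathbb Q^n:=Z^n_t\cdot\mathbb P$ the stopped process $(X_{s\wedge\tau_n}-y)_{s\ge 0}$ is a stopped $\mathscr R^N$-valued Brownian motion. For every Borel $A\subset\mathscr R^N$ of zero Lebesgue measure this yields $\mathbb Q^n[X_t\in A,\,\tau_n>t]=0$, and transferring back using $Z^n_t>0$ $\mathbb P$-a.s.\ gives $\mathbb P_y[X_t\in A,\,\tau_n>t]=0$; letting $n\to+\infty$ produces
\[
\mathbb P_y\bigl[X_t\in A,\ \sigma_{\rm col}(y)>t\bigr]=0.
\]

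\textbf{Step 2 (reduction and handling of collisions).} For an arbitrary $x\in\overline{\mathscr O}$, Item~\textbf{i} of Theorem~\ref{th.Cepa} and Fubini yield $\int_0^t\mathbb P_x[X_s\in\partial\mathscr O]\,ds=0$, so an arbitrarily small $\epsilon\in(0,t)$ can be chosen with $\mathbb P_x[X_\epsilon\in\mathscr O]=1$. The Markov property then expresses $\mathbb P_x[X_t\in A]$ as an average of $\mathbb P_y[X_{t-\epsilon}\in A]$ over $y\in\mathscr O$, reducing matters to the interior case. Step~1 handles the contribution from $\{\sigma_{\rm col}(y)>t-\epsilon\}$; to close the complementary event, I would apply the strong Markov property (available via the Feller property from Lemma~\ref{le.1a}) at $\sigma_{\rm col}(y)$ and iterate, exploiting the Bessel-type bound from the proof of Lemma~\ref{le.col}---the squared inter-particle distance is dominated by a squared Bessel process of dimension $2\gamma+1\in(1,2)$---to argue that the process leaves $\partial\mathscr O$ instantaneously after each collision, so that Step~1 can be reapplied on the successive excursions into $\mathscr O$.

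\textbf{Main obstacle.} The delicate part is the iteration over collisions in Step~2: collisions may accumulate, and one must show that the countable family of inter-collision excursions covers $(\epsilon,t]$ up to a time set whose image under $X$ misses every Lebesgue-null $A$. This ultimately rests on Item~\textbf{i} of Theorem~\ref{th.Cepa} and on the Bessel-type bound of Lemma~\ref{le.col} to rule out atoms of the law of $X_t$ on $\partial\mathscr O$ for $t>0$. Everything else (Girsanov on bounded drifts, path continuity, and the strong Markov property via Lemma~\ref{le.1a}) is classical.
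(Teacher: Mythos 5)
Your Step~1 (Girsanov under the localizing times $\tau_n$) is correct and cleanly gives $\mathbb P_y[X_t\in A,\ \sigma_{\rm col}>t]=0$ for $y\in\mathscr O$ and Lebesgue-null $A$; the Feller/Markov reduction from $x\in\overline{\mathscr O}$ to interior starting points via a good time $\epsilon$ with $\mathbb P_x[X_\epsilon\in\partial\mathscr O]=0$ is also fine. The gap is exactly where you flag it, in the ``iteration over collisions,'' and it is not merely a technicality: applying the strong Markov property at $\sigma_{\rm col}$ produces the term $\mathbb E_y[\mathbf 1_{\sigma_{\rm col}\le s}\,\mathbb P_{X_{\sigma_{\rm col}}}[X_{s-\sigma_{\rm col}}\in A]]$ with $X_{\sigma_{\rm col}}\in\partial\mathscr O$, which is again the original statement for a boundary starting point, so the recursion does not terminate. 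The tools you invoke do not close it: Item~\textbf{i} of Theorem~\ref{th.Cepa} controls only the Lebesgue measure of the time set $\{s:X_s\in\partial\mathscr O\}$ and gives $\mathbb P_z[X_s\in\partial\mathscr O]=0$ for \emph{a.e.}\ $s$ but not for a fixed $s$, which is what the iteration needs; the Bessel comparison $\wp_t\le\mathscr B_t$ is a one-sided bound and, started from $0$, the dominating squared Bessel process of dimension in $(1,2)$ has zeros accumulating at $0$, so it does not yield a strictly positive waiting time after a collision. Moreover, the collision times $\sigma^{(1)}\le\sigma^{(2)}\le\cdots$ may a priori accumulate before $t$, and the ``last collision before $t$'' is not a stopping time, so a forward strong-Markov iteration cannot reach the excursion containing $t$; without some quantitative contraction or a last-exit (co-optional) decomposition argument, the scheme does not deliver $\mathbb P_x[X_t\in A]=0$.

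The paper takes a genuinely different, PDE-flavoured route that sidesteps collisions entirely: it approximates $(X_t)$ by the regularized diffusions $X^n$ with smooth drift $\mathbf c_n$ (which converge in law), establishes the uniform $L^1$-in-space, $L^1$-in-time drift bound \eqref{eq.EXn}, then uses Duhamel's formula together with Young's convolution inequality to obtain uniform $L^p$ bounds \eqref{eq.Ess} on the approximate densities $f_n^x(t,\cdot)$ for $t$ bounded away from $0$, and finally passes to a weak $L^p$ limit. This avoids both Girsanov (which, as the paper itself remarks, is hopeless from a boundary starting point) and any excursion/strong-Markov bookkeeping across collisions, and it produces an actual $L^p$ density rather than mere absolute continuity, which is then reused to prove the strong Feller property in Theorem~\ref{th.SF}.
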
 
    
    Classical arguments usually employed to prove such a theorem, such as e.g.  those based on the Malliavin calculus or those which rely on the elliptic regularity theory, are difficult to apply directly on the process $(X_t,t\ge 0)$ since $\nabla V_I$ is not Lipschitz over $\overline{\mathscr O}$. 
     Note that Theorem \ref{th.De}  implies that collision times are random except possibly at time $0$ (indeed, $\mathbb P_x[ \sigma_{{\rm col}}=t]\le \mathbb  P_x[ X_t\in \partial \mathscr O]=0$, $t>0$).


    \begin{proof}
    The proof is divided into several steps. 
          \medskip

    \noindent
    \textbf{Step 1.} Preliminary analysis.  Let us recall some results we will need from~\cite{cepathesis,cepaJa}. Set for $n\ge 1$ and $y\in \mathscr R^N$, $\mathbf c_n(y):= -\psi_n(y)-\nabla V_c(y)$, where $\psi_n$ is the smooth convex and  globally Lipschitz vector field defined in~\cite[Eq. (2.20)]{cepaJa}. Denote by  $(X^n_t(x),t\ge 0)$ the solution on $\mathscr R^N$ to 
    $$dX^n_t(x)= \mathbf c_n(X^n_t)dt + dB_t, \ X_0=x.$$
    It is proved in~\cite[Section 2.4]{cepaJa} that for all $T>0$  and $x\in \overline{\mathscr O}$,  as $n\to +\infty$,  $(X^n_t(x),t\in [0,T])$ converges in distribution to $(X_t(x),t\in [0,T])$ in $\mathcal C([0,T], \mathscr R^N)$.  Following the computations led in the proof of~\cite[Proposition 5.5]{cepa2006equations},  for all $T>0$ and all compact subset $K$ of $\overline{\mathscr O}$, there exists $C>0$ such that 
\begin{equation}\label{eq.EXn0}
\forall n\ge 1, x\in K, t\in [0,T], \  \int_0^t\mathbb E_{x}[|\mathbf c_n(X^n_s)|]ds\le C.
\end{equation}

    \noindent
    \textbf{Step 1.}  For $x\in \mathscr R^N$ and $t>0$, let us denote by $f_n^{x} (t,z)$ the density of  
    $X^n_t(x)$, namely $\mathbb P_x[X^n_t(x)\in A]= \int_Af_n^{x}(t,z)dz$, $A\in \mathcal B(\mathscr R^N)$ (note by parabolic elliptic regularity or by Malliavin calculs~\cite{nualart2006malliavin}, $f_n^{x} (t,z)$ indeed exists and is a smooth function of $(t,x,y)\in \mathscr R_+^*\times \mathscr R^N\times \mathscr R^N$). In what follows  $K$ is a fixed compact subset of $\overline{\mathscr O}$ and $T>0$. Note that \eqref{eq.EXn0} rewrites 
    \begin{equation}\label{eq.EXn}
\sup_{n\ge 1, \, t\in [0,T], \, x\in K } \int_0^t  \int_{\mathscr R^N} |\mathbf c_n(z)|f_n^x(s,z)dz \, ds <+\infty.
\end{equation}
Since $\mathbf c_n$ is smooth, for each $n\ge 1$ and $x\in \mathscr R^N$,    the function $(t,y)\in \mathscr R_+^*\times \mathscr R^N\mapsto f_n^{x}(t,y)$ is a smooth solution  of the following parabolic equation over $\mathscr R^N$:
$$\partial_t f_n^{x}= \frac 12 \Delta f_n^{x} -{\rm div} (\mathbf c_n \, f_n^{x}).$$
Let us introduce  the heat kernel $\mathscr  G$ defined by $\mathscr  G(t,y)=t^{-d/2} h(t^{-1/2}y)$, $t>0$, $y\in \mathscr R^N$, where $h(w)=(2\pi)^{-N/2}\exp( -|w|^2/2)$, $w\in \mathscr R^N$. 
In what follows $\star$ denotes the convolution operator in the space variable.  Direct computations shows that for all $t>0$, 
$$\Vert \mathscr  G(t,\cdot)\Vert_{L^p}= t^{\frac N2 (\frac 1p-1)} \Vert h \Vert_{L^p} \text{ and } \Vert  \nabla \mathscr  G(t,\cdot)\Vert_{L^p}= t^{\frac N{2p} -\frac{N+1}{2}} \Vert \nabla  h \Vert_{L^p}.$$
 Now let $\phi_m\in [0,1]$ be a   family of smooth functions indexed by $m\ge 1$ such that $\phi_m(z)=1$ if $|z|\le m$ and $\phi_m(z)=0$ if $|z|>m+1$, and 
    \begin{equation}\label{eq.phiM}
    \sup_{m\ge 1} \sup_{z\in \mathscr R^N} (|\nabla \phi_m|+|\Delta\phi_m|)(z)<+\infty.
        \end{equation} 
        Set $\mathscr G_2(t,y):=\mathscr G(t/2,y)$. 
By Duhamel's formula and after several integrations by parts, we have for   $0<\epsilon<t$,
\begin{align*}
 \phi_m f_n^{x}(t,z)&= 
  \int_\epsilon^t \big [ f_n^{x}(s,\cdot )(\Delta \phi_m/2 +\mathbf c_n \cdot \nabla \phi_m) (\cdot )\big]\star [\mathscr G_2(t-s,\cdot ) ](z)ds\\
&\quad + \int_\epsilon^t \big [ f_n^{x}(s,\cdot)( \nabla   \phi_m +\mathbf c_n \phi_m  )(\cdot)\big]\star [\nabla \mathscr G_2(t-s,\cdot)](z) ds \\
&\quad + [\phi_m(\cdot) f_n^{x}(\epsilon, \cdot)]\star [\mathscr G_2(t-\epsilon,\cdot)](z), \ z\in \mathscr R^N.
\end{align*}
 Hence, by Young’s convolution inequality, we have for   $   \epsilon<t\le T$ and $p\ge 1$, 
 \begin{align*}
 &\Vert \phi_m (\cdot) f_n^{x}(t,\cdot)\Vert_{L^p}  \\
 &\le   C_p   \Vert  h \Vert_{L^p} \int_\epsilon^t \Vert f_n^{x}(s,\cdot )(\Delta \phi_m +\mathbf c_n \cdot \nabla \phi_m) (\cdot )\Vert_{L^1} \, (t-s)^{\frac N2 (\frac 1p-1)}ds\\
 &\quad + C_p   \Vert  \nabla h \Vert_{L^p} \int_\epsilon^t   \Vert f_n^{x}(s,\cdot)(2 \nabla   \phi_m +\mathbf c_n \phi_m  )(\cdot)\Vert_{L^1}\, (t-s)^{\frac N{2p} -\frac{N+1}{2}} ds \\
 &\quad + C_p\Vert \phi_m(\cdot) f_n^{x}(\epsilon, \cdot)\Vert_{L^1}\Vert   h \Vert_{L^p}(t-\epsilon)^{\frac N2 (\frac 1p-1)},
\end{align*}
where $C_p>0$ depends only on $p$. 
  Recall that $f_n^x\ge 0$ and $\Vert f_n^x(s,\cdot) \Vert_{L^1}=1$ for all $s>0$. Using \eqref{eq.EXn} and \eqref{eq.phiM}, for all $\epsilon,T>0$ with $2\epsilon<T$ and all compact subset $K$ of $\overline{\mathscr O}$, there exists $C>0$ such that for all $n\ge 1$, $t\in [2\epsilon,T]$, $x\in K$, and $m\ge 1$, 
 $$ \int_{\mathscr R^N}| \phi_m (z)|^p\, | f_n^{x}(t,z)|^p dz\le C.$$
Letting $m\to +\infty$ and using Beppo Levi's theorem, we deduce that for such $\epsilon>0$, $T>0$, and $K$, it holds:
   \begin{equation}\label{eq.Ess}
   \sup_{n\ge 1, \, t\in [2\epsilon,T],\,  x\in K} \Vert   f_n^{x}(t,\cdot)\Vert_{L^p}  <+\infty.
        \end{equation}

    \noindent
    \textbf{Step 3.} We conclude the proof of Theorem \ref{th.De}.  Fix  $p>1$,  $x\in \overline{\mathscr O}$,  and $t>0$. Thanks to \eqref{eq.Ess}, we can consider a subsequence $n'=n'(t,x)$ and a function $f^x(t,\cdot)$ such that  $f_{n'}^{x}(t,\cdot)\to f^x(t,\cdot)$ weakly in $L^{p}(\mathscr R^N,dz)$ as $n'\to +\infty$.  Hence,  for all $\phi:\mathscr R^N\to \mathscr R$ continuous and with compact support, it holds  in the limit $n'\to +\infty$:
    \begin{equation}\label{eq.Phi-}
    \int_{\mathscr R^N}\phi (z)f_{n'}^{x}(t,z)\to \int_{\mathscr R^N}\phi (z)f^{x}(t,z)dz.
    \end{equation}  
    Thus, since $X_t^n(x)\to X_t(x)$ in distribution (see the first step above), one has for such functions $\phi$,
    \begin{equation}\label{eq.Phi-0}
    \mathbb E_x[\phi(X_t)]=\int_{\mathscr R^N}\phi (z)f^{x}(t,z)dz.
        \end{equation}  
           Note that the previous considerations imply  that for all $t>0$ and $x\in \mathscr R^N$,  $f^{x}(t,z)\ge 0$ $dz$-a.e., $\int_{\mathscr R^N} f^{x}(t,z)dz=1$ and \eqref{eq.Phi-0} holds for $\phi \in f\in b\mathcal B(\mathscr   R^N)$.  The proof of this claim is standard but we write it for sake of completeness.      For all bounded Borel set $\mathscr A$, there exists $(\phi_n)_n\subset   C_c^0(\mathscr R^N)$, $\phi_n\ge 0$, s.t.  $\phi_n\to \mathbf 1_{\mathscr A}$ in $L^{p'}(\mathscr R^N,dz)$ ($p'=p/(p-1)$). By \eqref{eq.Phi-0}, it holds $0\le \int_{\mathscr R^N} \phi_n(z)f^{x}(t,z)dz   \to  \int_{\mathscr R^N} \mathbf 1_{\mathscr A}(z)f^{x}(t,z)dz$. Hence, $\int_{\mathscr R^N} \mathbf 1_{\mathscr A}(z) f^{x}(t,z)dz \ge 0$. Now, set $\mathscr B=\{f^{x}(t,\cdot)<0\}$ and  $\mathscr A_R:= \mathscr B\cap \mathsf B_{\mathscr R^N}(0,R)$, for $R>0$. Then, $  \int_{\mathscr R^N}  \mathbf 1_{\mathscr A_R}(z) f^{x}(t,z)dz =0$. Thus, $\mathscr A_R$ has Lebesgue measure $0$. Letting $R\to +\infty$, one gets  $f^{x}(t,z)\ge 0$ ($dz$-a.e.). Moreover, since $f_{n'}^{x}(t,\cdot)dz\to f^x(t,\cdot)dz$ vaguely, 
  $\int_{\mathscr R^N} f^{x}(t,z)dz\le 1$ (see e.g.~\cite[Chapter 4]{kallenberg}). Thus,  $f^{x}(t,\cdot)$ is integrable. Finally, we have,  
    $$1= \lim_{m\to +\infty } \mathbb E_x[\phi_m(X_t)]=\lim_{m\to +\infty } \int_{|z|\le m } f^x(t,z)dz+o_m(1) \Rightarrow \int_{\mathscr R^N} f^{x}(t,z)dz=1.$$   
    By~\cite[Proposition 4.4 in Chapter 3]{EK}, the probability measures $f^{x}(t,z) dz$ and $P_t(x,dz)$ are equal.
          The proof of Theorem \ref{th.De} is complete.   Note that actually the whole sequence   $(f_{n'}^{x}(t,\cdot))_{n\ge 1}$ converges in $L^{p}(\mathscr R^N,dz)$ as $n\to +\infty$, by uniqueness of its limit point. 
          \end{proof}

    The following result has  been proved in~\cite{zhangM} using a  coupling method combined with a change of probability measure (we also refer to~\cite{cepaJa} when the compact state space case, using a Bismut type formula as in~\cite{peszat1995strong}). We will give another independent proof of this fact, based on   the analysis led in the proof of Theorem \ref{th.De} above. 
    
 \begin{thm}\label{th.SF}
 Let $t>0$. Then, $P_t$ has the strong Feller property, i.e. $P_tf$ is continuous on $\overline{\mathscr O}$ for any $f\in b\mathcal B(\overline{\mathscr O})$. 
 \end{thm}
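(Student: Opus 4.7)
The plan is to combine the $L^p$-density bound implicit in the proof of Theorem~\ref{th.De} with Lusin's theorem and the coupling estimate of Lemma~\ref{le.1a}, so as to reduce strong Feller continuity to the classical Feller continuity of bounded continuous test functions. Fix $t>0$, $f\in b\mathcal B(\overline{\mathscr O})$ (extended by $0$ outside $\overline{\mathscr O}$), together with $x_0\in \overline{\mathscr O}$ and a sequence $(x_k)_{k\ge 1}$ in $\overline{\mathscr O}$ with $x_k\to x_0$. Set $K:=\{x_k,\, k\ge 1\}\cup\{x_0\}$, which is compact in $\overline{\mathscr O}$, and denote by $\mu_k$ the law of $X_t(x_k)$.

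\textbf{Step 1 (uniform density bound).} Fix some $p>1$. In the last step of the proof of Theorem~\ref{th.De}, the density $f^{y}(t,\cdot)$ of $X_t(y)$ is obtained as a weak $L^p$-limit of a subsequence $f_{n'}^{y}(t,\cdot)$. By weak $L^p$ lower-semicontinuity of the norm and by \eqref{eq.Ess} (applied with $2\epsilon<t<T$ and the compact $K$),
$$M:=\sup_{y\in K}\,\|f^{y}(t,\cdot)\|_{L^p(\mathscr R^N,dz)}<+\infty.$$
Denoting by $p'$ the conjugate exponent of $p$, Hölder's inequality then yields $\mu_k(A)\le M\,|A|^{1/p'}$ for every Borel set $A\subset \mathscr R^N$ and every $k$.

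\textbf{Step 2 (tightness and Lusin approximation).} By Lemma~\ref{le.1a}, $X_t(x_k)\to X_t(x_0)$ in $L^2(\mathbb P)$, hence in distribution, so $\{\mu_k\}$ is tight: for every $\epsilon>0$ there exists $R_\epsilon>0$ with $\mu_k(\{|z|>R_\epsilon\})<\epsilon$ uniformly in $k$. Given $\epsilon>0$, choose $\delta>0$ with $M\delta^{1/p'}<\epsilon$. By Lusin's theorem applied on $\overline B(0,R_\epsilon)$ equipped with the Lebesgue measure, followed by a truncation, there is a continuous function $g:\mathscr R^N\to \mathscr R$ with $\|g\|_\infty\le \|f\|_\infty$ and
$$\bigl|\{z\in \overline B(0,R_\epsilon):\, f(z)\neq g(z)\}\bigr|<\delta.$$
Combining with Step~1, uniformly in $k$,
$$|P_tf(x_k)-P_tg(x_k)|\le \int|f-g|\,d\mu_k\le 2\|f\|_\infty\bigl(\mu_k(\{|z|>R_\epsilon\})+M\delta^{1/p'}\bigr)\le 4\|f\|_\infty\,\epsilon,$$
and the same estimate holds with $x_0$ in place of $x_k$.

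\textbf{Step 3 (conclusion).} Since $X_t(x_k)\to X_t(x_0)$ in distribution and $g$ is bounded and continuous, $P_tg(x_k)\to P_tg(x_0)$. Letting $k\to+\infty$ and then $\epsilon\to 0$ yields $P_tf(x_k)\to P_tf(x_0)$, proving that $P_tf$ is continuous on $\overline{\mathscr O}$.

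The main obstacle is Step~1, namely transferring the bound \eqref{eq.Ess} from the regularized densities $f^{y}_n(t,\cdot)$ to the genuine densities $f^{y}(t,\cdot)$, uniformly in $y\in K$. This is essentially free from the weak $L^p$-compactness argument already carried out in the proof of Theorem~\ref{th.De} combined with weak lower semicontinuity; once that is in hand, Lusin's theorem and Lemma~\ref{le.1a} dispatch the remainder in the standard way.
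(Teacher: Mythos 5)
Your proof follows the same route as the paper: both derive the uniform $L^p$ bound on the densities $f^{y}(t,\cdot)$, $y\in K$, from \eqref{eq.Ess} by weak lower semicontinuity of the $L^p$ norm, and then use H\"older's inequality to obtain locally uniform absolute continuity of the transition kernels $P_t(x,dz)$ with respect to Lebesgue measure. The only difference is the final step: the paper concludes by invoking \cite[Theorem 2.1(b)]{schilling2012strong} (Feller $+$ locally uniform absolute continuity $\Rightarrow$ strong Feller), whereas you reprove that implication in a self-contained way via tightness of $(\mu_k)$, Lusin's theorem with a Tietze extension, and the distributional convergence $X_t(x_k)\to X_t(x_0)$ supplied by Lemma~\ref{le.1a} -- equivalent in substance, just more explicit.
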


 
\begin{remarks} 
One powerful    tool to prove the strong Feller property of  a solution to a SDE is to use a Girsanov formula. 
 Let us mention that it is immediate to see that when $N=2$, there is no hope to have a Girsanov formula relating the law of $(X_t,t\ge 0)$ and the law of a standard Brownian motion over $ \mathscr R^2$, when $X_0\in \partial \mathscr O$. 
  
 %
\end{remarks}


   \begin{proof}
    Let $s>0$ and $x\in K$ where $K$ is a compact subset of  $\overline{\mathscr O}$. Consider $0<\epsilon<s/2$ and $T\ge s$. 
Because $f_n^{x}(s,\cdot)\to f^x(s,\cdot)$ weakly in $L^2(\mathscr R^N,dz)$ as $n\to +\infty$ (see \eqref{eq.Phi-}),  one has that, using also the bound   \eqref{eq.Ess}:
    $$\Vert f^{x}(s,\cdot )\Vert_{L^2}\le \liminf_{n'} \Vert f_n^{x}(s,\cdot )\Vert_{L^2}\le   \sup_{n\ge 1, \, t\in [2\epsilon,T],\,  x\in K} \Vert   f_n^{x}(t,\cdot)\Vert_{L^2} <+\infty.$$
    Hence, one gets that for all $0<\epsilon<T$:
      \begin{equation}\label{eq.Ess2}
C^*:= \sup_{ s\in [2\epsilon,T],\,  x\in K} \Vert f^{x}(s,\cdot )\Vert_{L^2}  <+\infty.
        \end{equation}
        Let $\delta>0$. 
Equation \eqref{eq.Ess2} and Theorem \ref{th.De} imply  that for all ${\mathscr A}\in \mathcal B(\overline{\mathscr O})$ such that $\int_{\mathscr A}dz\le \delta$ and  $x\in K$,  
\begin{align*}
P_s(x,{\mathscr A})=\int_{\mathscr A} f^x(s,y)dy \le  \Big[\int_{\mathscr R^N} |f^x(s,y)|^2dy\Big]^{1/2}  \delta^{1/2} \le C^* \delta^{1/2},
\end{align*}
where $P_s(x,{\mathscr A}):=P_s\mathbf 1_{\mathscr A}(x)$. 
Thus, it follows that the  family of probability measures $(P_s(x,dz))_{x\in  \mathscr R^d}$ is  locally uniformly absolutely continuous with respect to  the Lebesgue measure over $\overline{\mathscr O}$, namely for all compact subset $K$ of $\overline{\mathscr O}$, 
 $$\lim_{\delta\to 0} \sup_{{\mathscr A}\in \mathcal B(\overline{\mathscr O}) ,\,  \int_{\mathscr A}dz\le \delta}\, \sup_{x\in K} P_s(x,{\mathscr A})=0.$$
The proof of the theorem is complete using~\cite[Item (b) in Theorem 2.1]{schilling2012strong} (recall  that $P_t$ has the Feller property). 
  \end{proof}

Note that  $\mathbf v\ge 0$ and hence  $V_c\ge 0$. 

   \begin{prop}\label{pr.L} Assume \eqref{eq.v'}. 
 Set for $\alpha >0$, $W=e^{\alpha V_c}\ge 1$. Then, if $1-\alpha/2>0$, $ \mathscr L W(x)/W(x)\to -\infty$ as $|x|\to +\infty$ ($x\in \overline{\mathscr O}$). 
 \end{prop}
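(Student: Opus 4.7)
The plan is to compute $\mathscr{L}W/W$ explicitly and bound each piece using the hypothesis \eqref{eq.v'} together with the Lipschitz regularity of $\mathbf{v}'$. Since $W=e^{\alpha V_c}$ is smooth on $\mathscr{O}$, we have $\nabla W=\alpha(\nabla V_c)W$ and $\Delta W=(\alpha\Delta V_c+\alpha^2|\nabla V_c|^2)W$, so
\[
\frac{\mathscr{L}W}{W}(x)=\frac{\alpha}{2}\Delta V_c(x)-\alpha\Bigl(1-\frac{\alpha}{2}\Bigr)|\nabla V_c(x)|^2-\alpha\,\nabla V_I(x)\cdot\nabla V_c(x).
\]
Since $V_c(x)=\sum_k\mathbf{v}(x^k)$, the first two terms split as a sum over particles:
\[
\frac{\alpha}{2}\Delta V_c-\alpha\Bigl(1-\frac{\alpha}{2}\Bigr)|\nabla V_c|^2=\alpha\sum_{k=1}^{N}\Bigl[\tfrac{1}{2}\mathbf{v}''(x^k)-\bigl(1-\tfrac{\alpha}{2}\bigr)|\mathbf{v}'(x^k)|^2\Bigr].
\]

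The first step is to dispense with the interaction term. Using the explicit form of $\nabla V_I$ and reindexing by pairs, a direct computation gives
\[
\nabla V_I(x)\cdot\nabla V_c(x)=-\gamma\sum_{1\le i<j\le N}\frac{\mathbf{v}'(x^j)-\mathbf{v}'(x^i)}{x^j-x^i}.
\]
On $\mathscr{O}$ each denominator is positive, and the convexity of $\mathbf{v}$ makes every summand non-negative; crucially, since $\mathbf{v}'$ is globally Lipschitz with some constant $L$, each summand is bounded by $L$. Hence $|\nabla V_I\cdot\nabla V_c|\le \gamma L\binom{N}{2}$ on $\mathscr{O}$, so the interaction contribution to $\mathscr{L}W/W$ is bounded by a constant $C_0:=\alpha\gamma L\binom{N}{2}$ uniformly on $\mathscr{O}$.

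Next, set $\delta:=1-\alpha/2>0$ and $g(u):=\mathbf{v}''(u)/2-\delta|\mathbf{v}'(u)|^2$. By hypothesis \eqref{eq.v'}, $g(u)\to-\infty$ as $|u|\to+\infty$. Since $\mathbf{v}'$ is globally Lipschitz, $\mathbf{v}''$ is bounded, and hence $g$ is bounded from above on $\mathscr{R}$ by some constant $M$. Combining the bounds yields
\[
\frac{\mathscr{L}W(x)}{W(x)}\le\alpha\sum_{k=1}^{N}g(x^k)+C_0\qquad\text{on }\mathscr{O}.
\]

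It remains to show $\sum_{k}g(x^k)\to-\infty$ as $|x|\to+\infty$ with $x\in\overline{\mathscr{O}}$. If $|x|\to+\infty$, then $\max_k|x^k|\ge|x|/\sqrt{N}\to+\infty$, so at least one coordinate, say $x^{k_\star}$, satisfies $|x^{k_\star}|\to+\infty$ and thus $g(x^{k_\star})\to-\infty$; the remaining $N-1$ terms are each bounded above by $M$. Therefore $\sum_k g(x^k)\le g(x^{k_\star})+(N-1)M\to-\infty$, which gives $\mathscr{L}W(x)/W(x)\to-\infty$. The value on $\partial\mathscr{O}$ is irrelevant since $|x|\to+\infty$ forces $x$ to leave any bounded neighbourhood of the boundary eventually; alternatively one works directly on $\mathscr{O}$ as the Lyapunov estimate is only used there in the sequel. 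There is no real obstacle here — the only point requiring care is the cross term $\nabla V_I\cdot\nabla V_c$, where the Lipschitz bound on $\mathbf{v}'$ converts the a priori singular quantity $(\mathbf{v}'(x^j)-\mathbf{v}'(x^i))/(x^j-x^i)$ into a harmless uniform bound.
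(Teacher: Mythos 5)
Your proof is correct and follows essentially the same route as the paper: compute $\mathscr{L}W/W$, rewrite the interaction cross-term $-\alpha\,\nabla V_I\cdot\nabla V_c$ as $\alpha\gamma\sum_{i<j}\bigl(\mathbf{v}'(x^j)-\mathbf{v}'(x^i)\bigr)/(x^j-x^i)$, bound these difference quotients uniformly via the Lipschitz constant of $\mathbf{v}'$, and conclude from hypothesis \eqref{eq.v'}. You spell out the final limiting step (one coordinate must diverge as $|x|\to+\infty$ while the remaining $g(x^k)$ are controlled by the upper bound coming from boundedness of $\mathbf{v}''$) a bit more explicitly than the paper, but this is the same argument, not a different method.
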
 
 \begin{proof}   
We have for $x\in \mathscr O$, \begin{align*}
\frac{ \mathscr L W(x)}{W(x)}&=  \alpha \mathscr L V_c(x) + \alpha^2 |\nabla V_c(x)|^2 /2\\
&=  \alpha \Delta V_c(x)/2  - \alpha(1-\alpha/2)  |V_c(x)|^2   -\alpha  \nabla V_I(x)\cdot \nabla V_c(x).
\end{align*}
For $x\in \mathscr O$, recall that   $- \partial_{x_i}V_I(x)= \gamma \sum_{j=1,j\neq i}^N\,  \frac{1}{x^ i-x^j }$. Then, for $x\in \mathscr O$, it  holds:
\begin{align*}
\frac{ \mathscr L W(x)}{W(x)}& =\alpha \sum_{i=1}^N [ \mathbf v''(x^i)/2 -  (1-\alpha/2) |\mathbf v'(x_i)|^2 ]   + \gamma \alpha \sum_{i=1}^N   \sum_{j=1,j\neq i}^N\,  \frac{\mathbf v'(x^i)}{x^i -x^j  }\\
&= \alpha \sum_{i=1}^N [ \mathbf v''(x_i)/2 -  (1-\alpha/2) |\mathbf v'(x^i)|^2 ] + \gamma\alpha    \sum_{  i<j} \Big [  \frac{\mathbf v'(x^i)}{x^i -x^j }  + \frac{\mathbf v'(x^j)}{x^j -x^i }\Big ]\\
&= \alpha \sum_{i=1}^N [ \mathbf v''(x_i)/2 -  (1-\alpha/2) |\mathbf v'(x^i)|^2 ] + \gamma\alpha  \sum_{  i<j}   \frac{\mathbf v'(x^j)-\mathbf v'(x^i)}{x^j -x^i }. 
\end{align*}
Note that    since $\mathbf v'$ is smooth and Lipschitz,   the function  
$$J: (u_1,u_2)\in \{(a,b)\in \mathscr R^2, a< b\} \mapsto \frac{\mathbf v'(u_2)-\mathbf v'(u_1)}{u_2-u_1 }$$
 is   bounded (say by $C_{\mathbf v}>0$)  and has a  continuous (bounded)  extension
 over   $\{(a,b)\in \mathscr R^2, a\le  b\}$, which is still denoted by $J$. 
Then, for all $x\in \overline{\mathscr O}$, 
\begin{align*}
\frac{ \mathscr L W(x)}{W(x)}&= \alpha \sum_{i=1}^N [ \mathbf v''(x_i)/2 -  (1-\alpha/2) |\mathbf v'(x_i)|^2 ]  + \gamma\alpha  \sum_{  i<j} J(x^j,x^i)  \\
&\le 
\alpha \sum_{i=1}^N [ \mathbf v''(x_i)/2 - (1-\alpha/2)  |\mathbf v'(x_i)|^2 ]  + C_{\mathbf v}\gamma\alpha N^2.
\end{align*} 
Thanks to \eqref{eq.v'}, when $x\in \overline{\mathscr O}$ and  $|x|\to +\infty$,  $ \mathscr  L W(x)/W(x)\to -\infty$. The proof of the proposition is complete. 
  \end{proof}

    \section{Main results on the killed process}

  For all nonempty open subset $\mathscr U$ of $\overline{\mathscr O}$, we set 
  $$\sigma_{\mathscr U}:=\inf\{t\ge 0, X_t\notin \mathscr U\},$$
 which   is the first exit time from $\mathscr U$ for the process $(X_t,t\ge 0)$.
  In all this work 
 \begin{equation}\label{eq.HpU}
 \mathscr U \text{  is an nonempty  open subset of } \overline{ \mathscr O},
 \end{equation}
    i.e. there exists an open subset $\mathscr U_*$ of $\mathscr R^N$ such that $\mathscr U= \mathscr U_*\cap \overline{ \mathscr O}$ and $\mathscr U_*\cap \overline{ \mathscr O}\neq \emptyset$.   
Consider the killed (outside $\mathscr U$) semigroup $(P^{\mathscr U}_t,t\ge 0)$ defined by:
\begin{equation}\label{eq.Ki}
P^{\mathscr U}_tf(x)=\mathbb E_x[f(X_t)\mathbf 1_{t<\sigma_{\mathscr U}}],  f\in b\mathcal B( \mathscr U),\ x\in  \mathscr U \text{ and } t\ge 0.
\end{equation}
  Its associated killed renormalized  semigroup is denoted by 
$$ \nu  Q^{\mathscr U}_t (\mathcal A):= \frac{\nu P^{\mathscr U}_t (\mathcal A)}{\mathscr \nu P^{\mathscr U}_t(\mathscr U)}= {\mathbb P_\nu[   X_t \in \mathcal A| t<\sigma_{\mathscr U}]} ,$$
for  $\mathcal A \in \mathcal B({\mathscr U})$ and  $\nu \in \mathcal M_b(\mathscr U)$.


    \begin{thm}\label{th.Sf}
    Assume \eqref{eq.HpU} and $\gamma\in (0,1/2)$. 
 Let $t>0$. Then, $P^{\mathscr U}_t$ has the strong Feller property. 
 \end{thm}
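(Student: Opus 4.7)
The plan is to mirror the proof of Theorem \ref{th.SF}, which ultimately rested on the uniform $L^2$ bound \eqref{eq.Ess2} on the transition densities $f^x(t,\cdot)$ combined with the application of \cite[Item (b) in Theorem 2.1]{schilling2012strong}. Both ingredients carry over to $P^{\mathscr U}_t$ essentially for free via the trivial domination $P^{\mathscr U}_t(x,A)\le P_t(x,A)$.

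Concretely, the argument would proceed in three steps. First, for $x\in\mathscr U$ and $A\in\mathcal B(\mathscr U)$,
$$P^{\mathscr U}_t(x,A)=\mathbb E_x[\mathbf 1_{X_t\in A}\mathbf 1_{t<\sigma_{\mathscr U}}]\le \mathbb E_x[\mathbf 1_{X_t\in A}]=P_t(x,A),$$
so by Theorem \ref{th.De} the sub-probability measure $P^{\mathscr U}_t(x,\cdot)$ admits a density $f^{x,\mathscr U}(t,\cdot)$ on $\mathscr U$ satisfying $0\le f^{x,\mathscr U}(t,\cdot)\le f^x(t,\cdot)$ a.e. Second, the uniform $L^2$ estimate \eqref{eq.Ess2} immediately transfers to $f^{x,\mathscr U}(t,\cdot)$, extended by zero to $\mathscr R^N$, yielding $\sup_{x\in K\cap\mathscr U}\|f^{x,\mathscr U}(t,\cdot)\|_{L^2}<+\infty$ for every compact $K\subset\overline{\mathscr O}$. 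Third, Cauchy-Schwarz gives the uniform absolute continuity
$$\lim_{\delta\to 0}\,\sup_{A\in\mathcal B(\mathscr U),\,|A|\le\delta}\,\sup_{x\in K\cap\mathscr U}P^{\mathscr U}_t(x,A)=0,$$
which is the key hypothesis of the Schilling--Wang criterion.

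The proof would then be concluded by invoking \cite[Item (b) in Theorem 2.1]{schilling2012strong} applied to the killed semigroup, exactly as was done for $P_t$ in Theorem \ref{th.SF}. The only point that does not transpose verbatim is the Feller property of $P^{\mathscr U}_t$, which entered (parenthetically) in the proof of Theorem \ref{th.SF}; this is where I expect the main obstacle to lie, since $\mathscr U$ is allowed to have an irregular boundary intersecting the collision set $\partial\mathscr O$. The natural route would combine the pathwise contraction estimate of Lemma \ref{le.1a} (giving uniformly convergent trajectories a.s.\ along a subsequence whenever $x_n\to x$) with the absolute continuity of $X_s(x)$ for every $s>0$ provided by Theorem \ref{th.De} (ruling out positive occupation time on $\partial\mathscr U$), in order to show that for continuous bounded $f$ the approximating paths do not pathologically graze $\partial\mathscr U$, so that $\sigma_{\mathscr U}(x_n)\to\sigma_{\mathscr U}(x)$ a.s.\ and one can pass to the limit in $\mathbb E[f(X_t(x_n))\mathbf 1_{t<\sigma_{\mathscr U}(x_n)}]$ by dominated convergence.
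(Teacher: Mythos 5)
Your first three steps (domination $P^{\mathscr U}_t(x,\cdot)\le P_t(x,\cdot)$, transfer of the uniform $L^2$ bound, and uniform absolute continuity via Cauchy--Schwarz) are correct and indeed come for free. But the paper does not go through the Schilling--Wang criterion for the killed semigroup at all, and for good reason: that criterion, as applied in Theorem~\ref{th.SF}, needs the Feller property as an input, and you have correctly identified that this is precisely the part that does not transfer. The paper instead invokes the Chung--Zhao argument (\cite[Theorem 2.2]{chung2001brownian}): one establishes Lemma~\ref{le.22} (uniform control on the total variation of $K_\cdot$ on compacts, and the uniform small-time estimate $\sup_{x\in K}\mathbb P_x[\sigma_{\mathsf B_{\bar{\mathscr O}}(x,r)}\le s]\to 0$ as $s\to 0^+$), and then transfers the \emph{strong} Feller property of $P_t$ directly to $P^{\mathscr U}_t$ by a splitting argument at a small time $s$, without ever needing the Feller property of $P^{\mathscr U}_t$ or any continuity of $\sigma_{\mathscr U}$ in the initial point.

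The sketch you give to fill the Feller gap is where the argument genuinely breaks. Lemma~\ref{le.1a} does give a.s.\ uniform convergence of trajectories along a subsequence, and from this one can deduce $\liminf_n \sigma_{\mathscr U}(x_n)\ge \sigma_{\mathscr U}(x)$ a.s.\ (since $\{X_s(x):s\le t\}$ is a compact subset of the open set $\mathscr U$ whenever $t<\sigma_{\mathscr U}(x)$). However, the reverse inequality $\limsup_n\sigma_{\mathscr U}(x_n)\le\sigma_{\mathscr U}(x)$ is \emph{false} in general: on the event that $X_\cdot(x)$ touches $\partial\mathscr U$ without instantaneously entering $\operatorname{int}(\mathscr U^c)$, the nearby paths $X_\cdot(x_n)$ may stay inside $\mathscr U$ past time $\sigma_{\mathscr U}(x)$. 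Ruling this out is exactly a boundary-regularity statement for $\mathscr U$, and the paper explicitly assumes none (``We emphasize that we have no regularity assumption on the boundary of $\mathscr U$''). Theorem~\ref{th.De} (absolute continuity of the one-time marginal) does not help here: $\partial\mathscr U$ can have positive Lebesgue measure for an arbitrary open $\mathscr U$, and even when it is Lebesgue-null, zero occupation time on $\partial\mathscr U$ says nothing about how the exit time behaves under perturbation of the starting point. So the claim $\sigma_{\mathscr U}(x_n)\to\sigma_{\mathscr U}(x)$ a.s.\ is unjustified, and without it the dominated convergence step (and hence the Feller property needed for Schilling--Wang) does not follow. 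The Chung--Zhao route used in the paper sidesteps this entirely, which is why Lemma~\ref{le.22}, and not your step three, is the real content of the proof.
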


  In the following   $\mathsf B_{\bar{\mathscr O}}(x,r):=\{y\in \overline{\mathscr O}, |y-x|<r\}$ is the open ball in $\overline{\mathscr O}$  of radius $r>0$ centered at $x \in \overline{\mathscr O}$. Note that $\mathsf B_{\bar{\mathscr O} }(x,r)=\mathsf B_{ \mathscr R^N}(x,r)\cap \overline{\mathscr O}$. 
 In view of the proof of~\cite[Theorem 2.2]{chung2001brownian},
it is enough, to deduce Theorem \ref{th.Sf}, to show the following lemma.

\begin{lem}\label{le.22}
For all compact subset $K$ of $\overline{\mathscr O}$ and $T>0$, it holds: 
$$
\sup_{x\in K}\mathbb E_x[\sup_{t\in [0,T]}|K_t(x)|]<+\infty \text{ and } \lim_{s\to 0^+}\sup_{x\in K} \mathbb P_x[\sigma_{\mathsf B_{\bar{\mathscr O}}(x,r)}\le s]=0.
$$
\end{lem}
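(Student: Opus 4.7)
The plan is to treat the two estimates separately, both leveraging the approximation scheme introduced in Step~1 of the proof of Theorem~\ref{th.De} together with the continuity estimate of Lemma~\ref{le.1a}.

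\textbf{Bound on $\mathbb E_x[\sup_{t\le T}|K_t|]$.} First I use Theorem~\ref{th.Cepa} item \textbf{iii} to write $K_t(x)=\int_0^t \nabla V_I(X_s(x))\,ds$, so $\sup_{t\in[0,T]}|K_t(x)|\le \int_0^T|\nabla V_I(X_s(x))|\,ds$. Recall the smoothed Penner-type approximation $\mathbf c_n=-\psi_n-\nabla V_c$ and the processes $(X^n_t(x),t\ge 0)$ used in Step~1 of the proof of Theorem~\ref{th.De}. The key input is \eqref{eq.EXn0}: $\int_0^T\mathbb E_x[|\mathbf c_n(X^n_s)|]\,ds\le C$ uniformly in $n\ge 1$ and $x\in K$. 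Combining this with the a priori $L^1$ bound $\mathbb E_x[|X^n_t|]\le |x|+C+\mathbb E[|B_t|]$ obtained directly from the SDE, and using the globally Lipschitz character of $\nabla V_c$, I get $\int_0^T\mathbb E_x[|\psi_n(X^n_s)|]\,ds\le C'$ uniformly in $n$ and $x\in K$. Then I pass to the limit $n\to\infty$: by the Cépa-convergence $X^n\to X$ in distribution in $\mathcal C([0,T],\mathscr R^N)$ (\cite[Section 2.4]{cepaJa}) and Skorokhod representation, combined with item \textbf{i} in Theorem~\ref{th.Cepa} which guarantees $X_s(\omega)\in\mathscr O$ for $\mathbb P\otimes ds$-a.e.~$(\omega,s)$, and the standard convergence property $\psi_n(y_n)\to \nabla V_I(y)$ whenever $y_n\to y\in\mathrm{dom}(\partial V_I)=\mathscr O$ for the Moreau--Yosida regularization, I obtain $\psi_n(X^n_s)\to \nabla V_I(X_s)$ $\mathbb P\otimes ds$-a.e. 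Fatou's lemma then yields $\sup_{x\in K}\int_0^T\mathbb E_x[|\nabla V_I(X_s)|]\,ds\le C'$, which is the first claim.

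\textbf{Continuity of exit times at $0$.} I argue by contradiction. Assume there exist $\varepsilon_0>0$, $s_n\to 0^+$, and $x_n\in K$ with $\mathbb P_{x_n}[\sigma_{\mathsf B_{\bar{\mathscr O}}(x_n,r)}\le s_n]>\varepsilon_0$. By compactness of $K$, I extract $x_n\to x^*\in \overline{\mathscr O}$. Since $X_t\in\overline{\mathscr O}$ a.s., the event $\{\sigma_{\mathsf B_{\bar{\mathscr O}}(x_n,r)}\le s_n\}$ forces $\sup_{t\le s_n}|X_t(x_n)-x_n|\ge r$. Splitting via
\[
|X_t(x_n)-x_n|\le |X_t(x_n)-X_t(x^*)|+|X_t(x^*)-x^*|+|x_n-x^*|,
\]
I bound each piece: (a) by Markov and Lemma~\ref{le.1a}, $\mathbb P[\sup_{t\le T}|X_t(x_n)-X_t(x^*)|\ge r/3]\le 9|x_n-x^*|^2/r^2\to 0$; (b) by a.s.~continuity of $t\mapsto X_t(x^*)$ at $t=0$, $\mathbb P[\sup_{t\le s_n}|X_t(x^*)-x^*|\ge r/3]\to 0$; (c) $|x_n-x^*|<r/3$ for $n$ large. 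This contradicts $\varepsilon_0>0$.

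\textbf{Main obstacle.} The delicate step is the passage to the limit in Part~1: identifying the limit of $\psi_n(X^n_s)$ $\mathbb P\otimes ds$-a.e. as $\nabla V_I(X_s)$. The crux is that $\nabla V_I$ blows up on $\partial\mathscr O$, so the identification is only possible once one exploits item \textbf{i} of Theorem~\ref{th.Cepa} to discard the $(\omega,s)$ for which $X_s(\omega)\in \partial\mathscr O$, combined with the Moreau--Yosida convergence $\psi_n(y_n)\to \nabla V_I(y)$ along interior sequences $y_n\to y\in\mathscr O$. Part~2, by contrast, uses only continuous dependence in the initial condition and compactness of $K$, and is expected to be routine once Lemma~\ref{le.1a} is available.
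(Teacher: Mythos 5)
Your proof of the second assertion is correct and takes a genuinely different, more elementary route than the paper's. The paper constructs a smooth cutoff $\Theta_x$ around $x$, applies It\^o's formula, and bounds the resulting drift and $dK$ terms using the first assertion of the lemma, obtaining the stronger linear estimate $\mathbb P_x[\sigma_{\mathsf B(x,\delta)}\le t]\le C_K\,t$. Your argument by contradiction uses only compactness of $K$, the contraction estimate of Lemma~\ref{le.1a} via Markov's inequality, and a.s.\ continuity of paths at $t=0$; it is shorter, does not invoke It\^o calculus, and --- notably --- does not rely on the first assertion. The trade-off is that you prove only the qualitative limit statement (which is exactly what the lemma asks for), whereas the paper's route also yields an explicit rate; since~\cite{chung2001brownian} is applied only through the statement of Lemma~\ref{le.22}, your version suffices.

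For the first assertion your route is also genuinely different from the paper's. The paper never returns to the approximating processes $X^n$: it works directly with $X$, using the C\'epa lower bound~\eqref{eq.Mma} on $\int_0^t(X_s-a_0)\cdot dK_s$ in terms of the total variation ${\rm V}_0^t(K)$, together with It\^o, Burkholder--Davis--Gundy, and Gronwall, first to bound $\mathbb E_x[\sup_{t\le T}|X_t|^2]$ and then to bound $\mathbb E_x[{\rm V}_0^T(K)]$. Your route instead writes $\sup_{t\le T}|K_t|\le\int_0^T|\nabla V_I(X_s)|\,ds$ via Theorem~\ref{th.Cepa}(iii), and tries to control the right-hand side by passing to the limit in~\eqref{eq.EXn0} with Skorokhod representation and Fatou. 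Steps (1)--(4) of your chain of bounds are sound: the claimed $L^1$ moment bound $\mathbb E_x[|X^n_t|]\le|x|+C+\mathbb E[|B_t|]$ does follow directly from~\eqref{eq.EXn0} and the integral form of the SDE, and this in turn gives a uniform bound on $\int_0^T\mathbb E_x[|\psi_n(X^n_s)|]\,ds$. The genuine gap is exactly the one you flag as the "main obstacle'': the assertion $\psi_n(y_n)\to\nabla V_I(y)$ for $y_n\to y\in\mathscr O$. This is not a generic property of regularizations of maximal monotone operators --- for the Yosida approximation it holds at interior points because the resolvent $J_{\lambda_n}$ is $1$-Lipschitz, $J_{\lambda_n}(y_n)\to y$, and $\nabla V_I$ is single-valued and continuous on $\mathscr O$, but it fails on the boundary of the domain --- and more importantly, $\psi_n$ in~\cite{cepaJa}[Eq.~(2.20)] is described in the paper only as "smooth, convex and globally Lipschitz''; whether it is actually the Yosida approximation, or some further mollification of it for which this joint convergence must be re-proved, is not established in the text you have access to. Until that specific convergence is verified for the actual $\psi_n$ used, step (5) is an assertion rather than a proof, and the paper's direct It\^o/C\'epa/Gronwall argument is both shorter and free of this dependence.
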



 \begin{proof}
 Let $K$ be a compact subset of $\overline{\mathscr O}$.
 \medskip
 
 \noindent
 \textbf{Step 1}. Let us prove that for  all $t\in [0,T]$, 
\begin{equation}\label{eq.BbK}
\sup_{x\in K}\mathbb E_x\big [\sup_{t\in [0,T]}|K_t(x)| \big ]<+\infty.
\end{equation}
Let us first prove that for  all $t\in [0,T]$, 
\begin{equation}\label{eq.Xsup0}
\sup_{x\in K}\mathbb E_x\big [\sup_{t\in [0,T]} |X_t(x)|^2\big ]<+\infty.
\end{equation}
Let $a_0$ be a point in the interior of the domain of the maximal monotone operator $\partial V_I$ , namely $a_0\in \mathscr O$, and  let $\gamma_0 >0$ be such that   $\overline{B}(a_0,\gamma_0 )\subset \mathscr O$. Let $\mu_0:=\sup\{|y|, y\in  \partial V_I(z) \text{ where } z\in  \overline{B}(a_0,\gamma_0 )\}=\sup\{|\nabla V_I(z)|, z\in \overline{B}(a_0,\gamma_0 )\}<+\infty$. 
From~\cite[Proposition 4.4]{cepa1998problame} and its proof (see also~\cite{cepathesis}), for all  $x\in \overline{\mathscr O}$ and  all $t\ge 0$, 
\begin{align}
\nonumber
\int_0^t(X_s(x)-a_0)\cdot dK_s(x)&\ge \gamma_0 {\rm V}_0^t(K(x)) -\mu_0 \int_0^t|X_s(x)-a_0|ds\\
\label{eq.Mma}
&\quad -\gamma_0 \mu_0t,
\end{align}
where ${\rm V}_0^t(K(x))$ is the  total variation  of $t\mapsto K_t(x)$ on $[0,t]$. Let $T>0$ be fixed. 
In the following, we simply denote $\sigma_{\mathsf B_{\bar{\mathscr O}}(a_0,n)}=\inf\{t\ge 0, |X_t-a_0|\ge n\}$ by $\sigma_n$. 
The sequence $(\sigma_n)_n$ increases to $+\infty$.  Since $K$ is compact, there exists $n_K$, for all $n\ge n_K$ and $y\in K$, $|y-a_0|< n$. 
Hence, using Itô formula and \eqref{eq.Mma}, we get for  $0\le s\le t\le T$,  $n\ge n_K$,  and $x\in K$:
\begin{align}
\nonumber
\frac 12 |X_{s\wedge \sigma_n}(x)-a_0|^2&= \frac 12 |x-a_0|^2 -\int_0^{s\wedge \sigma_n} (X_u(x)-a_0)\cdot \nabla V_c(X_u(x))du\\
\nonumber
&\int_0^{s\wedge \sigma_n}(X_u(x)-a_0) \cdot dB_u-\int_0^{s\wedge \sigma_n}(X_u(x)-a_0)\cdot dK_u(x) \\
\nonumber
&\quad +   \frac{{s\wedge \sigma_n}}2 \\
\nonumber
&\le   \frac 12 |x-a_0|^2 +\int_0^{s\wedge \sigma_n}(X_u(x)-a_0) \cdot dB_u\\
\nonumber
&-\gamma_0 {\rm V}_0^{s\wedge \sigma_n}(K(x)) +(\mu_0+ |\nabla V_c(a_0)| ) \int_0^{s\wedge \sigma_n}|X_u(x)-a_0|du\\
\label{eq.Lk}
&\quad +\gamma_0 \mu_0T+ \frac T2,
\end{align}
where we have used the convexity of $V_c$. 
Therefore, since $x\in K$, we have: 
\begin{align*}
\sup_{s\in [0,t]} |X_{s\wedge \sigma_n}(x)-a_0|^2&\le   C \Big[1+ \sup_{s\in [0,t]} \big |\int_0^{s\wedge \sigma_n}(X_u(x)-a_0) \cdot dB_u\big| \\
&+  \int_0^{t\wedge \sigma_n}\sup_{s\in [0,u]} |X_s(x)-a_0|\, du\Big],
\end{align*}
where $C>0$ is a constant which is independent of $x\in K$, $n\ge 1$,  and $(s,t)\in [0,T]^2$, and which, in the following, can change from one occurence to another. Taking expectation and using the Cauchy-Schwarz inequality and the fact that $\sqrt z\le z+1$ for $z\ge 0$,  
\begin{align*}
\mathbb E_x\big [\sup_{s\in [0,t]} |X_{s\wedge \sigma_n}(x)-a_0|^2\big ]&\le   C \Big(1+ \mathbb E_x\big [\sup_{s\in [0,t]} \big |\int_0^{s\wedge \sigma_n}(X_u(x)-a_0) \cdot dB_u\big|^2 \big ]\\
&+  \int_0^{t} \mathbb E_x\big [\sup_{s\in [0,u]} |X_s(x)-a_0|^2 \big]du\Big).
\end{align*}
Using the   Burkholder–Davis–Gundy inequalities for the stochastic integral, we get: 
\begin{align*}
\mathbb E_x\big [\sup_{s\in [0,t]} |X_{s\wedge \sigma_n}(x)-a_0|^2\big ]&\le    C \Big(1+ \mathbb E_x\big [  \int_0^{t\wedge \sigma_n}|X_{u}(x)-a_0|^2 du \big]   \\
&+  \int_0^{t} \mathbb E_x\big [ \sup_{s\in [0,u]} |X_{s\wedge\sigma_n}(x)-a_0|^2 \big]  du\Big)\\
&\le   C  \Big(1+    \int_0^{t}\mathbb E_x\big [ |X_{u\wedge\sigma_n} (x)-a_0|^2  \big ]du  \\
&+  \int_0^{t} \mathbb E_x\big [ \sup_{s\in [0,u]} |X_{s\wedge\sigma_n}(x)-a_0|^2\big]du\Big).
\end{align*}
  By Gronwall's inequality~\cite[Lemma 8.4]{le2016brownian} and  since $t\mapsto \mathbb E_x\big [ \sup_{s\in [0,t]} |X_{s\wedge\sigma_n}(x)-a_0|^2\big]$ is bounded (by $n^2$), we deduce  that 
  $$\mathbb E_x\big [\sup_{t\in [0,T]} |X_{t\wedge \sigma_n}(x)-a_0|^2\big ] \le C ,\  \forall x\in K, n\ge 1.$$
  Then Eq. \eqref{eq.Xsup0} follows letting $n\to +\infty$ and applying   Beppo Levi's theorem.  
  We now prove \eqref{eq.BbK}.   By \eqref{eq.Lk},  for all  $0\le t\le T$,  $n\ge n_K$,  and $x\in K$, $\frac 12 |X_t(x)-a_0|^2 \le   C +\int_0^t(X_u(x)-a_0) \cdot dB_u -\gamma_0 {\rm V}_0^{t }(K(x)) +C \int_0^{t }|X_u(x)-a_0|du$. Hence,  
  $${\rm V}_0^{t }(K(x)) \le    C\Big [1+ \int_0^t(X_u(x)-a_0) \cdot dB_u + \int_0^{t }|X_u(x)-a_0|du\Big ].$$ Taking expectation and using \eqref{eq.Xsup0} (note that $\int_0^t(X_u(x)-a_0) \cdot dB_u$ is a martingale, by \eqref{eq.Xsup0}), we thus have that $\mathbb E_x[\sup_{t\in [0,T]}{\rm V}_0^{t }(K(x))]\le C$ for all $x\in K$. This implies \eqref{eq.BbK} and proves the first inequality in Lemma \ref{le.22}.  
 \medskip
 
 \noindent
 \textbf{Step 2}. We now prove the second inequality in Lemma \ref{le.22}. 
Let $\Theta:\mathscr R^N\to [0,1]$ be a smooth function such that  $\Theta=0$ on $\mathsf B_{\mathscr R^N}(0,\delta/2)$ and $\Theta=1$ on $\mathsf B_{\mathscr R^N}^c(0,\delta)$.   Set  $\Theta_x(z)= \Theta(z-x)$.  Note that  for all $x,z\in \mathscr R^N$, $|\nabla \Theta_x|(z)\le \sup_{\mathscr R^N} | \nabla \Theta|$ and $|\Delta \Theta_x|(z)\le \sup_{\mathscr R^N} | \Delta \Theta|$. 
  Note that for all $x\in K$,   by Itô formula,   $(M_{t}^{\Theta_x}(x),t\ge 0)$   is a martingale, where 
  $$M_{t}^{\Theta_x}(x):=\Theta_x(X_{t }(x))-\Theta_x(x)- \int_0^{t }  \mathscr L  \Theta_x(X_{s}(x)) ds.$$   
Let $K_{\delta} $ be the closed  $\delta$-neighborhood of $K$ ($K_{\delta} $ is a compact subset of $\overline{\mathscr O}$)
Since in addition $\Theta_x(x)=0$, we have using the optional stopping theorem,
 \begin{align*}
& \mathbb E_{x}[\Theta_x(X_{t\wedge\sigma_{B(x,\delta)}})]\\
 &\le   \mathbb E_x\Big[\int_0^{t\wedge \sigma_{B(x,\delta)}} \mathscr L \Theta_x(X_{s}(x)) ds\Big]  \\
 &\le    \mathbb E_x\Big[\int_0^{t\wedge\sigma_{B(x,\delta)}} \big  (\, |K_s(x)\cdot \nabla \Theta_x(X_s)|+|\nabla V_c(X_s)\cdot  \nabla \Theta_x(X_s)|\, \big  )\,   ds\Big] \\
 &\quad + t \sup_{\mathscr R^N}|\Delta \Theta|.
 \end{align*}
When $x\in K$ and $s<\sigma_{B(x,\delta)}$, $X_{s}(x)\in K_{\delta}$. Consequently,   for all $x\in K$, 
we have that 
$$ \sup_{x\in K}\mathbb E_x\Big[\int_0^{t\wedge\sigma_{B(x,\delta)}} |\nabla V_c(X_s)\cdot  \nabla \Theta_x(X_s)|   ds\Big]\le t   \sup_{ K_{\delta} }| \nabla V_c| \, \sup_{\mathscr R^N} | \nabla \Theta|.$$
Using \eqref{eq.BbK}, we then deduce that for all $x\in K$,
$$\mathbb E_{x}\big [\Theta_x(X_{t\wedge\sigma_{B(x,\delta)}})\big ]\le t C_K,$$
where $C_K>0$ is a constant independent of $x\in K$ and $t\ge 0$. 
 Note also that $|X_{ \sigma_{B(x,\delta)}}(x)-x|=\delta$.
  Hence,  for all $x\in K$, $\Theta_x(X_{ \sigma_{B(x,\delta)}}(x))=1$ and
  $$ \mathbb P_x\big [\sigma_{B(x,\delta)}\le t\big ]= \mathbb E_{x}\big [\mathbf 1_{\sigma_{B(x,\delta)}\le t} \Theta_x(X_{ \sigma_{B(x,\delta)}})\big ]\le t C_K.$$
  This ends the proof of the lemma.
 \end{proof}

  \begin{thm}\label{th.TI}
       Assume \eqref{eq.HpU}, $\gamma\in (0,1/2)$, and  that  $\mathscr O\cap \mathscr U$ is connected.  
 Let $t>0$. Then,   for all $t>0$, $P^{\mathscr U}_t$ is topologically irreducible, i.e. for all $t>0$, $x,y \in  {\mathscr U}$, and all $r>0$, 
 $$\mathbb P_{x}\big [X_t\in \mathsf B_{\bar{\mathscr O}}(y ,r), t<\sigma_{\mathscr U}\big ]>0.$$
 \end{thm}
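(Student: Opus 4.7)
The plan is to reduce the statement to the Stroock--Varadhan support theorem for a standard SDE, after first pushing the process into the interior $\mathscr O$ where the drift is smooth.

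First, since $\sigma_{\mathscr U}>0$ a.s.\ (by continuity of paths and openness of $\mathscr U$ in $\overline{\mathscr O}$), one has $\mathbb P_x[\sigma_{\mathscr U}>\epsilon]>0$ for $\epsilon>0$ small, while Theorem~\ref{th.De} gives $\mathbb P_x[X_\epsilon\in\partial\mathscr O]=0$. Hence $\mathbb P_x[X_\epsilon\in\mathscr O\cap\mathscr U,\ \sigma_{\mathscr U}>\epsilon]>0$, and applying the Markov property at time $\epsilon$ reduces the theorem to the case $x\in\mathscr O\cap\mathscr U$. Similarly, since $\mathscr O$ is dense in $\overline{\mathscr O}$ and $\mathscr U$ is open in $\overline{\mathscr O}$, one can pick $y'\in\mathsf B_{\bar{\mathscr O}}(y,r/2)\cap\mathscr O\cap\mathscr U$, so it suffices to prove that for any $x\in\mathscr O\cap\mathscr U$, $\mathbb P_x[X_t\in\mathsf B_{\bar{\mathscr O}}(y',r/2),\ t<\sigma_{\mathscr U}]>0$.

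Next, $\mathscr O\cap\mathscr U=\mathscr O\cap\mathscr U_*$ is a connected open subset of $\mathscr R^N$, therefore path-connected, so one can choose a continuous $\varphi:[0,t]\to\mathscr O\cap\mathscr U$ with $\varphi(0)=x$ and $\varphi(t)=y'$. Its image is compact inside the open set $\mathscr O\cap\mathscr U$, so there exists $\rho>0$ such that the closed $\rho$-neighborhood $K$ of $\varphi([0,t])$ is contained in $\mathscr O\cap\mathscr U$. On the compact $K\subset\mathscr O$ the drift $b:=-\nabla V_c-\nabla V_I$ is smooth and bounded; extend it to a smooth, globally Lipschitz, bounded vector field $\tilde b$ on $\mathscr R^N$ with $\tilde b=b$ on $K$, and let $\tilde X$ be the strong solution of $d\tilde X_s=\tilde b(\tilde X_s)ds+dB_s$, $\tilde X_0=x$, driven by the same Brownian motion as $X$. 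By the Stroock--Varadhan support theorem applied to $\tilde X$ (smooth bounded drift, identity diffusion), $\mathbb P\bigl[\sup_{s\in[0,t]}|\tilde X_s-\varphi(s)|<\eta\bigr]>0$ for every $\eta>0$. Fix $\eta<\min(\rho,r/2)$; on this event $\tilde X$ stays inside $K$, where $\tilde b=b$. Item~\textbf{iii} in Theorem~\ref{th.Cepa} gives that $X$ satisfies $dX_s=b(X_s)ds+dB_s$, and the Lipschitz character of $b$ on $K$ yields pathwise uniqueness until exit from $K$ (standard Gronwall argument), whence $X_s=\tilde X_s$ as long as $\tilde X$ remains in $K$. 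On the support-theorem event we therefore have $X_s=\tilde X_s\in K\subset\mathscr O\cap\mathscr U$ for every $s\in[0,t]$, so $t<\sigma_{\mathscr U}$ and $X_t\in\mathsf B_{\bar{\mathscr O}}(y',\eta)\subset\mathsf B_{\bar{\mathscr O}}(y,r)$, giving the required strict positivity.

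The main obstacle, which is precisely what motivates the two preliminary reductions, is the explosion of $\nabla V_I$ on $\partial\mathscr O$: this prevents a direct application of Stroock--Varadhan (or of Malliavin calculus, or of Gaussian bounds) to $X$ itself. The key idea is to confine the process to a tube that is compactly contained in $\mathscr O$, where the drift becomes a smooth bounded field and classical SDE tools apply.
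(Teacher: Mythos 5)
Your proof is correct, but it takes a genuinely different route from the paper in both of its main steps, and the comparison is instructive.

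For the reduction from a boundary starting point $x\in\partial\mathscr O\cap\mathscr U$ to an interior one, you invoke Theorem~\ref{th.De} directly: $X_\epsilon$ has a Lebesgue density, hence $\mathbb P_x[X_\epsilon\in\partial\mathscr O]=0$, which combined with $\mathbb P_x[\sigma_{\mathscr U}>\epsilon]>0$ gives $\mathbb P_x[X_\epsilon\in\mathscr O\cap\mathscr U,\ \epsilon<\sigma_{\mathscr U}]>0$ and one can apply the Markov property. The paper instead argues by contradiction using Item~\textbf{i} of Theorem~\ref{th.Cepa} (the set of collision times has zero Lebesgue measure): it shows that otherwise the trajectory would spend a nontrivial time interval on $\partial\mathscr O$. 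Your version is more economical, given that Theorem~\ref{th.De} is already available, though the paper's variant relies only on the earlier Cépa--Lépingle result.

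For the irreducibility on $\mathscr O\cap\mathscr U$, you choose a continuous path $\varphi$ from $x$ to $y'$ inside $\mathscr O\cap\mathscr U$, thicken it to a compact tube $K\subset\mathscr O\cap\mathscr U$, extend the drift $b=-\nabla V_c-\nabla V_I$ from $K$ to a smooth bounded globally Lipschitz field $\tilde b$, apply the Stroock--Varadhan support theorem to $\tilde X$, and use Gronwall pathwise uniqueness inside $K$ (where $X$ solves the same equation, thanks to Item~\textbf{iii} of Theorem~\ref{th.Cepa}) to transfer the positivity back to $X$. The paper instead takes a connected open set $\mathscr V$ with $\overline{\mathscr V}\subset\mathscr O\cap\mathscr U$ and applies a Girsanov change of measure, reducing to the well-known support of Brownian paths. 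Both are localizations that exploit the smoothness of the drift on compacta of $\mathscr O$; the paper's Girsanov route needs no modification of the drift or uniqueness argument, while your route avoids Girsanov entirely. It is worth noting that the paper explicitly remarks that the non-Lipschitz character of $V_I$ on $\overline{\mathscr O}$ ``prevents from using the standard arguments... usually based on the Stroock--Varadhan support theorem''; your localization to a tube strictly inside $\mathscr O$ sidesteps exactly that obstruction, so there is no contradiction, but it does mean you are using the tool the authors chose to avoid. One small presentational point: after the Markov-property reduction at time $\epsilon$, the remaining time horizon is $t-\epsilon$, so the path $\varphi$ should be parametrized over $[0,t-\epsilon]$; since your tube construction works for any positive horizon this is harmless, but worth fixing when writing it out in full.
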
 
 
 
Notice that  choosing $\mathscr U =\overline {\mathscr O}$ shows that   the non-killed semigroup  $P_t$ is   topologically irreducible. 
Note also that $V_I$ is not locally Lipschitz over $\overline{\mathscr O}$ (and even worse, it is infinite  on $\partial {\mathscr O}$) which prevents from using,  at least directly,  the  standard arguments  to show the irreducibility of semigroups of  solutions to stochastic differential equations which are usually based on the Stroock-Varadhan support theorem~\cite{SV,ben}. 

  \begin{proof}
  To prove Theorem \ref{th.TI}, we need to investigate the probability for the process not to exit $\mathscr U$ before reaching a fixed deterministic ball. 
   As $r>0$, it is enough to consider the case  when 
$$
x\in \mathscr U\text{ and } y \in \mathscr U\cap \mathscr O.
$$ 
The proof is divided into two steps. 
\medskip

\noindent
   \textbf{Step 1.} The case when 
\begin{equation}\label{eq.xy1}
x,y\in   \mathscr U\cap \mathscr O.
\end{equation}
Since $\mathscr O\cap \mathscr U$ is a connected open subset of $\mathscr R^N$ (and thus it is path connected), we can consider   an open  and connected subset  $\mathscr V$ of $\mathscr R^N$ containing $x$ and $y$, and such that $\overline{\mathscr V}\subset \mathscr O \cap \mathscr U$. 
We recall 
that 
$$H(z)=V_c(z)+V_I(z), \ z\in \overline{\mathscr O}.$$  Using standard techniques (see e.g.~\cite{cattiaux09,guillinqsd3}) and since $H$ is smooth over $\overline{\mathscr V}$ (because $\overline{\mathscr V}\subset \mathscr O \cap \mathscr U$), we have the following Girsanov formula: for all $z\in \mathscr V$, $T\ge 0$, and all $F\in b\mathcal B(\mathcal C([0,T], \mathscr V))$, 
 $$
 \mathbb E_z[F(X_{[0,T]})\mathbf 1_{t<\sigma_{\mathscr V}}]= \mathbb E_z[F(B_{[0,T]})\, m_t^B\, \mathbf 1_{t<\sigma^B_{\mathscr V}}],
 $$
 where $\sigma^B_{\mathscr V}(x):= \inf\{t > 0, B_t\notin \mathscr V \}$
 is the   first exit time   of  the  process $(B_t(x)=x+B_t,t\ge 0)$ from $\mathscr V$, where we recall that $B_t=(B_t^1,\ldots,B_t^N)\in \mathscr R^N$ is a standard Brownian motion,  and  
  $$m_t^B(z)  =  \exp\Big[ -  \int_0^t \nabla H(B_s(z)) \cdot d B_s-\frac 12 \int_0^t  |\nabla H(B_s(z))  |^2 ds   \Big].$$
  Note that  $m_t^B(z)\mathbf 1_{t<\sigma^B_{\mathscr V}}(z)$ is a.s. finite. 
  In particular, we have for all $z\in \mathscr V$, $t\ge 0$, and all $f\in b\mathcal B(\mathscr V)$, 
  \begin{equation}\label{eq.Girsanov}
 \mathbb E_z[f(X_t)\mathbf 1_{t<\sigma_{{\mathscr V}}}]= \mathbb E_x\big [f(B_t)\, m_t^B\, \mathbf 1_{t<\sigma^B_{{\mathscr V}}}\big ].
 \end{equation}   
For  any $r>0$, it is well known that  that for all $x\in \mathscr V$ and $t>0$, 
 $$  \mathbb P_x[B_t\in \mathsf B_{ {\mathscr V}}(y ,r), t<\sigma^B_{{\mathscr V}}]>0.$$ 
Indeed,  this can be shown  using the knowledge of the support of the law of the trajectories of a standard Brownian motion. 
   Then, using \eqref{eq.Girsanov} with $f=\mathbf 1_{B_{\mathscr V}(y,r)}$ ($r>0$), 
 we deduce that  for all $t>0$, $x,y\in  {\mathscr U}\cap \mathscr O$, and all $r>0$,
\begin{equation}\label{eq.st1}
\mathbb P_x\big [X_t\in \mathsf B_{\bar{\mathscr O}}(y ,r), t<\sigma_{\mathscr U}\big ] \ge \mathbb P_x[X_t\in \mathsf B_{ {\mathscr V}}(y ,r), t<\sigma_{{\mathscr V}}]>0.
 \end{equation}
This proves  Theorem \ref{th.TI}  in this case, namely when \eqref{eq.xy1} holds.
\medskip 

\noindent
\textbf{Step 2}. We are left to prove Theorem \ref{th.TI} when   $\mathscr U\cap \partial \mathscr O\neq \emptyset$ and   
\begin{equation}\label{eq.xyO}
x \in   \mathscr U\cap \partial \mathscr O \text{ and } y\in \mathscr O,
\end{equation}
namely when $x$ belongs to the boundary of $\mathscr O$ (i.e. when the process  starts with a collision). 
\medskip

\noindent
\textbf{Step 2a}.
 Let us prove that for all $T_F>0$, there exists  $T=T_x\in [0,T_F]$, 
\begin{equation}\label{eq.O-}
\mathbb P_x[X_T\in \mathscr O, T<\sigma_{\mathscr U}]>0.
\end{equation}  
If it is not the case then there exists $T_F>0$, for all $t\in [0,T_F]$, $\mathbb P_x[X_t\in \partial \mathscr O \text{ or }  \sigma_{\mathscr U}\le t ]=1$, and so:
$$  \mathbb P_x\Big [    \bigcap_{t \in [0,T_F]\cap \mathscr Q}\big \{X_t\in \partial \mathscr O \text{ or }  \sigma_{\mathscr U}\le t \}\Big ]=1,$$
 where $\mathscr Q$ stands for the set of rational numbers. This rewrites
 $$  \mathbb P_x\Big [    \forall t \in [0,T_F]\cap \mathscr Q,  \, X_t\in \partial \mathscr O \text{ or }  \sigma_{\mathscr U}\le t  \Big ]=1,$$
 i.e. there exists $\mathbf \Omega_x\subset \mathbf  \Omega$ with $\mathbb P(\mathbf \Omega_x)=1$ such that for all $\omega\in \mathbf \Omega_x$ and all $t \in [0,T_F]\cap \mathscr Q$, either $X_t(\omega)\in \partial \mathscr O$ or $\{X_u(\omega), u\in [0,t]\}\not\subset \mathscr U$.
 In what follows, $\mathbf \Omega_x$  denotes a set of probability $1$ which can change from one occurence to another.  
When $X_0=x$, we have that  a.s. $\sup_{s\in [0,u]} |X_s-x|\to 0$ as 
$u\to 0^+$. Therefore, since $x\in \mathscr U$, we deduce that   when $X_0=x$,   for all $\omega\in \mathbf \Omega_x $,  there exists  $\epsilon(\omega)\in (0,T_F)$, such that $\{X_u(\omega), u\in [0,\epsilon(w)]\}\subset \mathscr U$. Hence, for all $  \omega\in \mathbf \Omega_x $  and all $t\in [0,\epsilon(\omega)]\cap \mathscr Q$, 
$X_t(\omega)\in \partial \mathscr O$. By continuity of the trajectories of the process $(X_t,t\ge 0)$ and because $\partial \mathscr O$ is closed, we deduce that for all $  \omega\in \mathbf \Omega_x $, it holds $$\{X_u(\omega), u\in [0,\epsilon(w)]\} \subset \partial \mathscr O.$$ This contradicts Item \textbf{i} in Theorem \ref{th.Cepa} above. The proof of \eqref{eq.O-} is thus complete. 
\medskip

\noindent
\textbf{Step 2b}. End of the proof of Theorem \ref{th.TI}. 
Let  $t>0$. Consider  $T\in [0,t/2]$   as in \eqref{eq.O-} and set $t=T+u$, $u>0$. 
By the  Markov property of the process $(X_t(x),t\ge 0)$, we have 
\begin{align*}
&\mathbb P_x\big [X_t\in \mathsf B_{\bar{\mathscr O}}(y ,r), t<\sigma_{\mathscr U}\big ]\\
&=\mathbb E_x\Big[\mathbf 1_{T<\sigma_{\mathscr U} }\mathbb P_{X_T }\big [X_{u}\in \mathsf B_{\bar{\mathscr O}}(y ,r), u<\sigma_{\mathscr U}\big ] \Big]\\
&\ge \mathbb E_x\Big[ \mathbf 1_{T<\sigma_{\mathscr U}, X_T\in \mathscr O }  \,  \mathbb P_{X_T }\big [X_{u}\in \mathsf B_{\bar{\mathscr O}}(y ,r), u<\sigma_{\mathscr U}\big ]  \Big].
\end{align*}
 If the last quantity in the previous inequality vanishes,  then a.s. we have:
 $$ \mathbf 1_{T<\sigma_{\mathscr U}, X_T\in \mathscr O }  \,  \mathbb P_{X_T }\big [X_{u}\in \mathsf B_{\bar{\mathscr O}}(y ,r), u<\sigma_{\mathscr U}\big ]  =0.$$
 Using \eqref{eq.O-}, there exists $\mathbf \Omega_{x,T}\subset \mathbf  \Omega$, with $\mathbb P_x[\mathbf \Omega_{x,T}]>0$ such that  for all $  \omega \in \mathbf \Omega_{x,T}$, $T<\sigma_{\mathscr U}(\omega)$ and $X_T(\omega)\in \mathscr O$, and therefore, it holds:
 $$  \mathbb P_{X_T(\omega) }\big [X_{u}\in \mathsf B_{\bar{\mathscr O}}(y ,r), u<\sigma_{\mathscr U}\big ]  =0.$$
Since $X_T(\omega)\in \mathscr O$, this contradicts \eqref{eq.st1}. Hence,  $\mathbb P_x\big [X_t\in \mathsf B_{\bar{\mathscr O}}(y ,r), t<\sigma_{\mathscr U}\big ]>0$, which is the desired result.  The proof of the theorem is complete.  \end{proof}

Note that the proof of the previous theorem  implies that  if  $\overline{\mathscr O}\setminus  \mathscr U$ contains a nonempty open ball in $\mathscr R^N$, for all $x\in \mathscr U$, $\mathbb P_x[\sigma_{\mathscr U}<+\infty]>0$. 

\section{Main result and extension}

\subsection{Main result in the collision case}

Let $W$ be as in Proposition \ref{pr.L} with $1-\alpha/2>0$. 
Before stating the main result of this work, we define for $q>0$, $b\mathcal B_{W^{q}}(\mathscr U)$   as the set of real valued mesurable functions $f$  over $\mathscr U$ such that $f/W^q$ is bounded. We also define  $\mathcal C_{bW^{q}}(\mathscr U):= \{f\in b\mathcal B_{W^{q}}(\mathscr U), f\text{ is continuous}\}$. The spectral radius of bounded linear operator $P$ over $b\mathcal B_{W^{q}}(\mathscr U)$ is denoted by $\mathsf r_{sp}(P|_{b\mathcal B_{W^{q}}(\mathscr U)})$. 
  Using~\cite[Theorem 2.2]{guillinqsd} (and its note) together with Lemma \ref{le.1a},  Proposition \ref{pr.L}, and Theorems \ref{th.SF}, \ref{th.Sf}, and \ref{th.TI}, we  deduce the following result on the behavior of the process $(X_t,t\ge 0)$ conditioned not to exit  a nonempty  open subset $\mathscr U$ of the Polish space $\overline{\mathscr O}$. 
  
 \begin{thm} \label{th.1}
 Assume \eqref{eq.v'}, $\gamma\in (0,1/2)$,  $\mathscr O\cap \mathscr U$ is connected, and $\overline{\mathscr O}\setminus  \mathscr U$ contains a nonempty open ball in $\mathscr R^N$.   Let $p\in (1,+\infty)$. 
  Then,   there exists a unique quasi-stationary distribution  $\rho^*$ for  $(Q^{\mathscr U}_t,t\ge 0)$ in   $  \mathcal P_{W^{1/p}}(\mathscr U)$ and moreover:
\begin{enumerate} 
  \item[{\rm \textbf A}.] 
   For all $t>0$, $P^{\mathscr U}_t:b\mathcal B_{W^{1/p}}(\mathscr U)\to b\mathcal B_{W^{1/p}}(\mathscr U)$ is compact and there exists 
   $$\lambda>0$$
      such that $\mathsf r_{sp}(P^{\mathscr U}_t|_{b\mathcal B_{W^{1/p}}(\mathscr U)} ) =e^{-\lambda t}, \ \forall t>0$. 
   Furthermore,  $\rho^*    P^{\mathscr U}_t =e^{-\lambda t}\rho^*$, for all $t\ge 0$, and $\rho^* (O)>0$ for all nonempty open subsets $O$ of $ \mathscr U$. In addition,  there is a unique   function $\varphi \in \mathcal C_{bW^{1/p}}( \mathscr U)$   such that  $\rho^*   (\varphi )=1$ and $
      P^{\mathscr U}_t \varphi= e^{-\lambda  t} \varphi  \text{ on } \mathscr U, \forall t\ge0$.  
  Moreover, $\varphi>0$   everywhere on $\mathscr U$.
 \item[{\rm \textbf B}.] There exist   $c_1>0$, and   $c_2\ge 1$, s.t.  for all  $t>0$ and all   $\nu\in \mathcal P_{W^{1/p}  }( \mathscr U)$:
 $$
 \sup_{\mathcal A\in\mathcal B(\mathscr U)}\big |  \nu Q^{\mathscr U}_t(\mathcal A)-\rho^*  (\mathcal A)\big |\le c_2 \, e^{-c_1 t} \frac{\nu(W^{1/p} )}{\nu(\varphi)}.
 $$
  \item[{\rm \textbf C}.] For all $x\in \mathscr U$, $\mathbb P_x[\sigma_{\mathscr U}<+\infty]=1$. 
\end{enumerate} 
 \end{thm}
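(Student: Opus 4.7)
The proof is essentially a verification step: one invokes \cite[Theorem 2.2]{guillinqsd} applied to the Markov process $(X_t,t\ge 0)$ on the Polish space $\overline{\mathscr O}$, killed outside $\mathscr U$. The plan is to check one by one the hypotheses of that abstract Krein--Rutman type theorem, all of which have been prepared by the preceding results.

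First, I would gather the ingredients already established. Lemma \ref{le.1a} gives the $L^2$-non-expansion of the semigroup, which in particular implies the ordinary Feller property of $(P_t,t\ge 0)$ on $\overline{\mathscr O}$. Theorem \ref{th.De} ensures that $X_t(x)$ has a density for $t>0$, and Theorem \ref{th.SF} shows that $P_t$ is strong Feller on $\overline{\mathscr O}$. Theorem \ref{th.Sf} extends the strong Feller property to the killed semigroup $P^{\mathscr U}_t$. Theorem \ref{th.TI} provides the topological irreducibility of $P^{\mathscr U}_t$ under the connectedness assumption on $\mathscr O \cap \mathscr U$. Finally, Proposition \ref{pr.L}, used with any $\alpha \in (0,2)$, supplies the Lyapunov function $W=e^{\alpha V_c}\ge 1$ satisfying $\mathscr L W/W \to -\infty$ at infinity in $\overline{\mathscr O}$, which is the compact-containment/Lyapunov assumption required.

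Next, I would feed these ingredients into \cite[Theorem 2.2]{guillinqsd}. This directly produces items \textbf{A} and \textbf{B}: the compactness of $P^{\mathscr U}_t$ on $b\mathcal B_{W^{1/p}}(\mathscr U)$, the existence of a principal eigenvalue $e^{-\lambda t}$ with $\lambda>0$, a unique normalized positive continuous eigenfunction $\varphi \in \mathcal C_{bW^{1/p}}(\mathscr U)$, a unique quasi-stationary distribution $\rho^* \in \mathcal P_{W^{1/p}}(\mathscr U)$ (which charges every nonempty open subset of $\mathscr U$ by topological irreducibility), and the exponential convergence in total variation at rate $c_1$, uniformly in the initial law $\nu$ up to the factor $\nu(W^{1/p})/\nu(\varphi)$. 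For item \textbf{C}, I would observe that $\mathbf 1_{\mathscr U}$ belongs to $b\mathcal B_{W^{1/p}}(\mathscr U)$, so the spectral estimate from \textbf{A} (or equivalently item \textbf{B} applied to $\nu=\delta_x$) yields $\mathbb P_x[t<\sigma_{\mathscr U}] = P^{\mathscr U}_t\mathbf 1_{\mathscr U}(x) \le C(x)\, e^{-\lambda t} \to 0$, whence $\mathbb P_x[\sigma_{\mathscr U}<+\infty]=1$ for every $x \in \mathscr U$.

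The main difficulty here is not in this assembly step but lies upstream: one has to ensure that the abstract framework of \cite[Theorem 2.2]{guillinqsd} genuinely applies to the singular-drift process $(X_t,t\ge 0)$, i.e.\ that the Lyapunov inequality from Proposition \ref{pr.L} is valid pointwise on $\overline{\mathscr O}$ despite $\nabla V_I$ blowing up on $\partial \mathscr O$, and that the strong Feller and topological irreducibility of $P^{\mathscr U}_t$ hold even when starting from $x\in \partial \mathscr O\cap \mathscr U$ (i.e.\ from a collision configuration). These are precisely the points that Theorems \ref{th.De}, \ref{th.SF}, \ref{th.Sf}, and \ref{th.TI} were designed to overcome, so once they are in hand the present statement follows by direct application of the abstract result.
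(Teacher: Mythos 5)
Your proposal follows exactly the paper's route: the theorem is stated as a direct consequence of the abstract Krein--Rutman result \cite[Theorem 2.2]{guillinqsd}, with the hypotheses supplied precisely by Lemma \ref{le.1a}, Proposition \ref{pr.L}, and Theorems \ref{th.SF}, \ref{th.Sf}, \ref{th.TI}, and the paper gives no further proof beyond this assembly. Your additional remark deriving item \textbf{C} from the spectral bound on $P^{\mathscr U}_t\mathbf 1_{\mathscr U}$ is a correct and welcome elaboration of a point the paper leaves implicit.
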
  
 The positive real number $\lambda$ is the so-called the principal eigenvalue of $P^{\mathscr U}_t$ over $b\mathcal B_{W^{1/p}}(\mathscr U)$. It can be easily shown that  both $\lambda$ and $\rho^*$ do not depend on $p\in  (1,+\infty)$.

\subsection{Non collision case: extension of Theorem \ref{th.SF} when $\gamma>1/2$}
\label{sec.seE}

 Assume  for simplicity that  $\mathbf v(u)=u^2/2$. Since  $H$ is lower bounded, we assume that $H\ge 1$. When $\gamma\ge  1/2$, since $\mathscr LH\le CH$ over $\mathscr O$ (see the proof of~\cite[Lemma 1]{rogers1993interacting}),  it holds a.s. for all $x\in \mathscr O$,   $X_t(x)\in \mathscr O$ for all $t\ge 0$.  
When $\gamma\ge  1/2$, the assertions of Lemma~\ref{le.1a} and Theorem~\ref{th.SF}   are   valid on the state space $\mathscr O$, providing $X_0(x)=x\in \mathscr O$. The assertions of Theorems~\ref{th.Sf} and~\ref{th.TI} are also still valid when $\gamma\ge 1/2$  and  when  $\mathscr U$ is a subdomain of $\mathscr O$. 
All these claims can be proved   using e.g. the method developed in~\cite{guillinqsd3}. When $\gamma>1/2$ and setting $U=\exp(aH)$,   it holds following the  computations led in \cite[Lemma 1]{rogers1993interacting}:
$$\frac{\mathscr L U(x)}{U(x)}=  - \alpha |\nabla H(x)|^2 + \frac \alpha  2\Delta H(x) + \frac{\alpha^2}{2}  |\nabla H(x)|^2 \to-\infty   \text{ as $x\to \partial \mathscr O\cup\{\infty\}$}, 
$$  
providing $\alpha>0$ is such that $\gamma (1-\alpha/2)>1/2$. 
Hence, for the process  \eqref{eq.L}, all the assertions of Theorem~\ref{th.1} are still valid for    a subdomain $\mathscr U$ of $\mathscr O$   when $\gamma>1/2$ with the Lyapunov function $U $.  
When $\gamma=1/2$, the construction of a Lyapunov function $U$ such that   ${\mathscr L U}/{U}$ is inf-compact over $\mathscr O$   is left for a futur work.

   \medskip
   
   \noindent
   \textbf{Acknowledgement.}
 A. Guillin is supported by the ANR-23-CE-40003, Conviviality, and has benefited from a government grant managed by the Agence Nationale de la Recherche under the France 2030 investment plan ANR-23-EXMA-0001. B.N. is supported by the grant IA20Nectoux from the Projet I-SITE Clermont CAP 20-25 and by the ANR-19-CE40-0010, Analyse Quantitative de Processus Métastables (QuAMProcs).

{\small
 \bibliography{multivoque}  

\bibliographystyle{plain} 

}

 \end{document}